\documentclass{amsart}

\usepackage[utf8]{inputenc}
\usepackage{amsmath}
\usepackage{amssymb}
\usepackage{latexsym}
\usepackage{amsthm}
\usepackage{bbm}
\usepackage{enumerate}
\usepackage{epsfig}
\usepackage{graphicx}
\usepackage{xspace}
\usepackage{caption}
\usepackage{subfig}
\usepackage[autostyle]{csquotes}
\usepackage{xcolor}
\usepackage{hyperref}
\usepackage{url, hypcap}
\definecolor{darkblue}{RGB}{0,0,160}
\hypersetup{
colorlinks,%
citecolor=darkblue,%
filecolor=black,%
linkcolor=darkblue,%
urlcolor=darkblue
}
\usepackage[
text={440pt,575pt},
headheight=9pt,
centering
]{geometry}

\headsep=4mm
\footskip=6mm
\newtheorem{thm}{Theorem}[section]
\newtheorem{lemma}[thm]{Lemma}

\newtheorem{cor}[thm]{Corollary}

\newtheorem{conj}[thm]{Conjecture}

\theoremstyle{definition}

\newtheorem{remark}[thm]{Remark}
\newtheorem{defn}[thm]{Definition}

\numberwithin{equation}{section}

\newcommand{\RR}{\mathbb{R}}

\newcommand{\Mathematica}{\textsc{Mathematica}\xspace}

\begin{document}

\title[Optimality regions in multiple regression with correlated random coefficients ]{Optimality regions for designs in multiple linear regression models with correlated random coefficients }

\author[U.~Graßhoff]{ Ulrike Graßhoff}
\address[]{Wirtschaftswissenschaftliche Fakultät\\ Humboldt-Universität zu Berlin\\ Unter den Linden 6, 10099 Berlin\\ Germany}
\email{grasshou@hu-berlin.de}
\author[H.~Holling]{Heinz Holling}
\address[]{Westf\"alische Wilhelms-Universit\"at Münster\\ Psychologisches Institut IV\\  Fliednerstr. 21\\ 48149 M\"unster\\ Germany}
\email{holling@uni-muenster.de}
\urladdr{\url{https://www.uni-muenster.de/PsyIFP/AEHolling/de/personen/holling.html}}

\author[F.~Röttger]{Frank Röttger}
\address[]{Fakultät für Mathematik\\ Otto-von-Guericke Universität
Magdeburg\\ Universit\"atsplatz 2, 39106 Magdeburg\\Germany}
\email{frank.roettger@ovgu.de}
\urladdr{\url{http://www.imst3.ovgu.de/Arbeitsgruppe/Frank+Rottger}}
\thanks{Corresponding author: Frank Röttger}
\author[R.~Schwabe]{Rainer Schwabe}
\address[]{Fakultät für Mathematik\\ Otto-von-Guericke Universität
Magdeburg\\ Universit\"atsplatz 2, 39106 Magdeburg\\Germany}
\email{rainer.schwabe@ovgu.de}
\urladdr{\url{http://www.imst3.ovgu.de}}

\begin{abstract}
This paper studies optimal designs for linear regression models with correlated effects for single responses. We introduce the concept of rhombic design to reduce the computational complexity and find a semi-algebraic description for the $ D $-optimality of a rhombic design via the Kiefer--Wolfowitz equivalence theorem. Subsequently, we show that the structure of an optimal rhombic design depends directly on the correlation structure of the random coefficients.

\smallskip
\noindent \textbf{Keywords.} $D$-optimal design, heteroscedastic model, random coefficients, multiple regression, semi-algebraic geometry
\end{abstract}

\maketitle

\section{Introduction}
Hierarchical regression models with random coefficients enjoy growing importance in biological and psychological applications, whenever there is a variation with respect to the observed subjects. Hereby we often cannot expect the random coefficients to be uncorrelated, which means that we assume that the random coefficients are e.g.~normally distributed with a population mean and a non-diagonal covariance matrix. A special model that will be the topic of this paper are random effects models for linear regression with singular responses, which means that we obtain only one observation per unit. This particular model was motivated by Freund and Holling in \cite{Freund-Holling2008} and Patan and Bogacka in \cite{Patan-Bogacka2007}. 
A natural question that arises is to find optimal experimental designs for these models with respect to some optimality criterion. 
Gra{\ss}hoff et al. determined $ D $-optimal designs that maximize the determinant of the corresponding information matrix, for a couple of different covariance structures in \cite{GHS2009} and \cite{GDHS2012}.
They found that in contrast to fixed effects models for multiple linear regression optimal settings may, surprisingly, occur in the interior of the design region under certain conditions on the covariance structure of the random coefficients. 
In the present paper, we investigate conditions on the covariance structure to discriminate situations in which optimal designs are completely supported on the boundary of the design region as in fixed effects models and situations in which optimal designs may have additional support points in the interior.
This is done for the special class of \emph{rhombic} designs, which are invariant with respect to permutations of the regressors and simultaneous sign change and which we will introduce in Section \ref{s:Multiplelinregression}. Section \ref{s:RhombicEquivalenceTheorem} shows via the Kiefer-Wolfowitz equivalence theorem \cite[Theorem 3.7]{silvey1980optimal} how the parameter regions for which rhombic designs with or without interior points are D-optimal are described by semi-algebraic sets, which are sets defined by polynomial inequalities and equations and how the optimality depends on the covariance structure. Furthermore, we show that for the assumed covariance structure of the random coefficients, the $D$-optimality of designs with interior support points translates to a simple matrix equation for the information. We show as a consequence of the results in Section~\ref{s:RhombicEquivalenceTheorem} that the distinction, whether a $D$-optimal rhombic design requires interior support points or not, can be made by evaluating a polynomial only dependent on the covariance matrix of the random coefficients. Based on these results, we are able to compute optimal designs and their optimality regions explicitly for small to moderate dimensions in Section \ref{s:Examples} and we conjecture results for arbitrary dimensions in Section \ref{s:conjectures}.

\section{General setup}
We consider a random coefficient regression model $ Y_i(x_i)= f(x_i)^{T}b_i + \varepsilon_i, \, i=1,\ldots n $ for observations $Y_i$  at experimental settings $x_i$ where $f$ is a $ p $-dimensional vector of linearly independent regression functions, $b_i$ is a $ p $-dimensional vector of random coefficients and $\varepsilon_i$ are additional observational errors. The random coefficients are assumed to be distributed with unknown mean vector $ \beta $ and prespecified dispersion matrix $ D $, whereas the error terms $\varepsilon_i$ are distributed with zero mean and equal variance $ \sigma_\varepsilon^2 $. Moreover the random coefficients and the error terms are assumed to be uncorrelated.
In this note we assume that all observations $Y_i$ are independent, i.\,e.\ only 
one observation is made for each realization $b_i$ of the random 
coefficients. Moreover, we assume here that an intercept is included in the 
model ($f_1(x)\equiv 1$) such that the additive observational 
error $\epsilon_i$ may be subsumed into the random intercept. This can be achieved by substituting the first entry $ b_{i1} $ in the random coefficient vector by $ b_{i1}+\varepsilon_i $ and the first entry $ d_0 $ in the dispersion matrix D by $ d_0+\sigma_\varepsilon^2 $.
The model can hence be rewritten as a heteroscedastic linear fixed effects model,
\begin{align}
\label{GHS:HETERO}
Y_i(x_i)=f(x_i)^{T}\beta + 
\varepsilon_i ,
\end{align}
where now $ \varepsilon_i=f(x_i)^{T}(b_i-\beta)$  
with mean zero and the variance function defined by 
$\sigma^2(x)=f(x)^{T} D f(x)$. 
Within this heteroscedastic linear model for each single setting $x $ in a design region $ 
\mathcal{X}$ the elemental information matrix \cite{Atkinson2014} equals
$
M(x)=f(x)f(x)^{T}/ 
\sigma^2(x)
$, assuming that $ \sigma^2(x)>0 $ for all $ x \in \mathcal{X} $.
An exact design of sample size n can be described by a finite set of mutually distinct settings $ x_j ,\,  j=1,...,m $, for the explanatory variable and the corresponding numbers $ n_j $ of  replications at $ x_j $, where $ x_j $ may be chosen from the design region $\mathcal{X}$ of potential settings.  Equivalently, a standardized version $ \xi $ may be characterized by the proportions $ \xi(x_j)=n_j/n $ at settings $ x_j $. We call $\xi(x_j)$ the \emph{design weight} at \emph{design point} $ x_j $.
Then for a design $\xi$, the standardized (per observation) information matrix is given by 
$ M(\xi)=\sum_{j=1}^m \xi (x_j)M(x_j), $ which is proportional to the finite sample information matrix with a normalizing constant $ \frac{1}{n} $.
Note that the covariance matrix for the weighted least squares estimator $\hat{\beta}$, which is the best linear unbiased estimator for $\beta$ is proportional to the inverse of the information matrix. Hence, maximizing the information matrix is equivalent to minimizing the 
covariance matrix of $\hat{\beta}$. 

To compare different designs we consider the most popular criterion, the $D$-criterion, with 
respect to which a design $\xi$ is $D$-optimal, if it maximizes the logarithm $ \log \det(M(\xi)) $ of the determinant of the information matrix. This is equivalent to the minimization of the volume of the confidence ellipsoid for $\beta$ under the assumption of normally distributed errors.  As discrete optimization on the set of exact designs is generally to complicated we relax the condition on the weights $ \xi(x_j) $ being multiples of $ \frac{1}{n} $ and consider approximate designs $ \xi $ in the spirit of Kiefer \cite{Kiefer1974} with real-valued weights $ w_j=\xi(x_j)\ge 0 $ satisfying $ \sum_{j=1}^{n}w_j=1. $ A detailed introduction to the theory of optimal design is \cite{silvey1980optimal}.
In the setting of approximate designs, for which the proportions 
$\xi(x)$ are not necessarily multiples of $1/n$, where $n$ denotes the 
sample size, the $D$-optimality of a design $\xi^*$ can be established by the 
well-known Kiefer-Wolfowitz equivalence theorem (see \cite[Theorem 2.2.1]{Fedorov1972}, for a 
suitable version):
\begin{thm}[Extended Kiefer-Wolfowitz equivalence theorem]\label{t:equivalence}
A design $\xi^*$ is $D$-optimal on $ \mathcal{X} $, if and only if $f(x)^{T} 
M(\xi^*)^{-1}f(x)/ \sigma^2(x)\leq p$, 
uniformly in $x \in \mathcal{X}$.
\end{thm}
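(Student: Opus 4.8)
The plan is to absorb the variance function into the regression vector, thereby reducing the heteroscedastic statement to the classical Kiefer--Wolfowitz equivalence theorem, and then to run the standard concavity argument. Since $\sigma^2(x)>0$ on $\mathcal{X}$, I would set $\tilde f(x)=f(x)/\sigma(x)$, so that the elemental information matrix becomes $M(x)=\tilde f(x)\tilde f(x)^{T}$ and $M(\xi)=\int_{\mathcal{X}}\tilde f(x)\tilde f(x)^{T}\,\xi(\mathrm{d}x)$ is exactly the information matrix of a homoscedastic linear model with regression functions $\tilde f$. Under this substitution the sensitivity function $f(x)^{T}M(\xi^*)^{-1}f(x)/\sigma^2(x)$ equals $\tilde f(x)^{T}M(\xi^*)^{-1}\tilde f(x)$, so the asserted inequality is precisely the classical one and it suffices to establish the theorem in the homoscedastic case.

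Write $\Phi(\xi)=\log\det M(\xi)$, extended by $-\infty$ when $M(\xi)$ is singular. Because $\log\det$ is concave on the cone of positive semidefinite matrices and $\xi\mapsto M(\xi)$ is linear, $\Phi$ is concave on the convex set of approximate designs; moreover, linear independence of $f$ (hence of $\tilde f$) guarantees the existence of designs with nonsingular information, so a $D$-optimal $\xi^*$ has $\det M(\xi^*)>0$ and $\Phi(\xi^*)$ is finite. For an arbitrary design $\xi$ put $\phi(\alpha)=\Phi\big((1-\alpha)\xi^*+\alpha\xi\big)$, $\alpha\in[0,1]$; since $M(\xi^*)\succ0$, the matrix $(1-\alpha)M(\xi^*)+\alpha M(\xi)$ is positive definite for $\alpha$ near $0$, so $\phi$ is differentiable there, and differentiating $\log\det$ together with the linearity of $M(\cdot)$ in the design (which makes the interchange of derivative and integral immediate) gives
\[
\phi'(0^+)=\operatorname{tr}\!\big(M(\xi^*)^{-1}M(\xi)\big)-p=\int_{\mathcal{X}}\psi(x,\xi^*)\,\xi(\mathrm{d}x)-p,\qquad \psi(x,\xi^*):=\frac{f(x)^{T}M(\xi^*)^{-1}f(x)}{\sigma^2(x)}.
\]
Taking $\xi=\xi^*$ yields the auxiliary identity $\int_{\mathcal{X}}\psi(x,\xi^*)\,\xi^*(\mathrm{d}x)=p$.

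For the ``only if'' direction, optimality of $\xi^*$ forces $\phi'(0^+)\le 0$ for every $\xi$ (otherwise $(1-\alpha)\xi^*+\alpha\xi$ would have strictly larger criterion value for small $\alpha>0$), and specialising $\xi$ to the one-point design at $x$ gives $\psi(x,\xi^*)\le p$ for all $x\in\mathcal{X}$. For the ``if'' direction, assume $\psi(\cdot,\xi^*)\le p$ on $\mathcal{X}$; then for any design $\xi$, concavity of $\phi$ on $[0,1]$ yields $\Phi(\xi)-\Phi(\xi^*)=\phi(1)-\phi(0)\le\phi'(0^+)=\int_{\mathcal{X}}\psi(x,\xi^*)\,\xi(\mathrm{d}x)-p\le 0$ (the inequality being trivial when $M(\xi)$ is singular), so $\xi^*$ is $D$-optimal. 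The step I expect to require the most care is making the directional-derivative computation and the chord estimate $\phi(1)-\phi(0)\le\phi'(0^+)$ fully rigorous while permitting $\phi(1)=-\infty$: this is handled by the fact that for a concave $\phi$ that is finite at $0$ the difference quotients $(\phi(\alpha)-\phi(0))/\alpha$ decrease as $\alpha\downarrow0$ to $\phi'(0^+)\in(-\infty,\infty]$, here finite, so the chord slope at $\alpha=1$ is dominated by $\phi'(0^+)$. A minor point is that ``uniformly in $x$'' here means the pointwise bound $\sup_{x\in\mathcal{X}}\psi(x,\xi^*)\le p$, for which no compactness of $\mathcal{X}$ is needed.
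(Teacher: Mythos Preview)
The paper does not prove this statement; it quotes it as the well-known Kiefer--Wolfowitz equivalence theorem and refers to \cite[Theorem~2.2.1]{Fedorov1972} for a version adapted to the heteroscedastic setting. Your proposal supplies a complete, self-contained argument, and it is correct: the substitution $\tilde f=f/\sigma$ reduces the claim to the classical homoscedastic equivalence theorem, and the concavity/directional-derivative computation you carry out is the standard proof of that result. The care you take with the chord inequality $\phi(1)-\phi(0)\le\phi'(0^+)$ when $\phi(1)=-\infty$, and with the auxiliary identity $\int_{\mathcal X}\psi(x,\xi^*)\,\xi^*(\mathrm dx)=p$, is appropriate and makes the two implications go through cleanly. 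In short, your route is not genuinely different from the one the paper relies on---it is precisely the textbook argument behind the citation---but it goes beyond what the paper itself provides, which is only a reference.
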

Let $ \psi(x;\xi):=f(x)^{T} (p D- 
M(\xi)^{-1})f(x) $ be the suitably transformed sensitivity function. When we substitute 
$\sigma^2(x)=f(x)^{T} Df(x)$ 
into this relation and rearrange terms, $D$-optimality is achieved, if
\begin{equation}
\label{GHS:EQUI1}
\psi(x;\xi^*)\geq 0
\end{equation}
for all $x\in \mathcal{X}$.
Moreover, equality is attained in \eqref{GHS:EQUI1} for design points in the support of an optimal design $ \xi^* $:
\begin{cor}[{\cite[Corollary~3.10]{silvey1980optimal}}]\label{c:silvey}
	It holds that $ \psi(x;\xi^*)=0 $ for all $ x\in \mathcal{X} $ with $ \xi^*(x)>0 $.
\end{cor}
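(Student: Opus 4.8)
The plan is to deduce the corollary from Theorem~\ref{t:equivalence} together with a single normalization identity for the sensitivity function, so that no new machinery is needed.

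First I would record the elementary trace identity for the information matrix of the optimal design. Since $\xi^*$ is an approximate design supported on finitely many points $x_1,\dots,x_m$ and $M(\xi^*)=\sum_{j=1}^m \xi^*(x_j)\,f(x_j)f(x_j)^T/\sigma^2(x_j)$ is nonsingular (which a $D$-optimal design must ensure, as otherwise $\log\det M(\xi^*)=-\infty$), linearity and cyclicity of the trace give
\begin{equation*}
\sum_{j=1}^m \xi^*(x_j)\,\frac{f(x_j)^T M(\xi^*)^{-1} f(x_j)}{\sigma^2(x_j)} \;=\; \operatorname{tr}\!\bigl(M(\xi^*)^{-1} M(\xi^*)\bigr) \;=\; p ,
\end{equation*}
where the last step uses $\sum_{j=1}^m \xi^*(x_j)=1$ and that $M(\xi^*)$ is a $p\times p$ matrix.

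Next I would combine this with the pointwise inequality of Theorem~\ref{t:equivalence}. Set $h(x):=p-f(x)^T M(\xi^*)^{-1} f(x)/\sigma^2(x)$, which is nonnegative on all of $\mathcal{X}$ by $D$-optimality of $\xi^*$. The identity above reads $\sum_{j=1}^m \xi^*(x_j)\,h(x_j)=0$; a sum of nonnegative terms with strictly positive weights vanishes only if every summand vanishes, so $h(x)=0$ whenever $\xi^*(x)>0$, i.e.\ $f(x)^T M(\xi^*)^{-1} f(x)=p\,\sigma^2(x)$ at every support point. Substituting $\sigma^2(x)=f(x)^T D f(x)$ then yields, for such $x$,
\begin{equation*}
\psi(x;\xi^*)=f(x)^T\!\bigl(pD-M(\xi^*)^{-1}\bigr)f(x)=p\,\sigma^2(x)-f(x)^T M(\xi^*)^{-1} f(x)=0 ,
\end{equation*}
which is the assertion.

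I do not expect a genuine obstacle: the argument is the standard observation that an average which equals its maximal attainable value forces that value to be attained everywhere on the support. The only points deserving a word of care are the nonsingularity of $M(\xi^*)$ (needed for $M(\xi^*)^{-1}$ to be defined, and automatic since $D$-optimality requires $\det M(\xi^*)>0$) and the fact that $\xi^*$, being an approximate design, is discrete, so that the ``$\xi^*$-almost everywhere'' conclusion is genuinely a statement at each individual support point.
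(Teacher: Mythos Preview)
Your argument is correct and is precisely the classical proof: the trace identity forces the $\xi^*$-average of $f(x)^T M(\xi^*)^{-1} f(x)/\sigma^2(x)$ to equal $p$, and combined with the pointwise bound from Theorem~\ref{t:equivalence} this pins the value to $p$ at every support point. The paper itself does not supply a proof of this corollary; it simply quotes \cite[Corollary~3.10]{silvey1980optimal}, where essentially the same averaging argument is given, so there is nothing further to compare. One cosmetic remark: the normalization $\sum_j \xi^*(x_j)=1$ is not actually used in your displayed trace computation (it is already built into the definition of $M(\xi^*)$), so that clause can be dropped.
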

For notational convenience we define
\[\Gamma(\xi):=p D- M(\xi)^{-1},\]
such that $ \psi(x;\xi)=f(x)^{T} \Gamma(\xi)f(x) $.

\section{Multiple linear regression}\label{s:Multiplelinregression}
In the following we consider the situation of a multiple linear regression model with $K$ factors where we have $ n $ observations
\begin{align}\label{eq:model}
	Y_i(x_i)&=\beta_0+\sum_{k=1}^K \beta_k x_{ik}+\varepsilon_i  
\end{align}
with $x_i=(x_{i1},...,x_{iK})^{T} \in \mathcal{X}=[-1,1]^K$ and $ \text{Var}(\varepsilon_i)=\sigma^2(x_i)=f(x_i)^T D f(x_i) $. Here we assume that we can choose the design points from the symmetric standard hypercube. The vector of regression functions is given by $f(x)=(1,x_1,...,x_K)^{T}$, such that the model contains an intercept by the first component of $ f $. Note that now $ p=K+1 $ and $ \beta=(\beta_0,\ldots,\beta_K)^T $.

We assume that the random coefficients $ b_{i1},...,b_{iK} $ associated with the components $ x_1,...,x_K $ of the regressor are homoscedastic with variance $ d_1 $ and equi-correlated with covariance $ d_2 $.
Moreover, let the random intercept $ b_{i0} $ be uncorrelated with the other random coefficients. To be more precise we consider a $ p\times p $-dimensional dispersion matrix
\begin{align}
D& =\begin{pmatrix}
d_0&&0\\
0&&D_1\\
\end{pmatrix},\label{eq:D}
\end{align}
where $ D_1 $ is a completely symmetric $ K \times K $-dimensional matrix
\begin{align*}
D_1&= (d_1-d_2)I_{K}+ d_2 \mathbbm{1}_{K}\mathbbm{1}_{K}^T.
\end{align*}
Here, $ I_k $ defines the $ K\times K $-dimensional identity matrix and $ \mathbbm{1}_{K} $ the vector of length $ K $ where all entries equal $ 1 $.

\begin{defn}[Model cone]
	We define
	\[\mathcal{C}_K:=\left\{(d_0,d_1,d_2)^T \in \mathbb{R}^3~\middle| ~d_0>0,d_1>0, -\frac{d_1}{K-1}\le d_2\le d_1\right\}\]
	as the \emph{model cone}, so the values of $ (d_0,d_1,d_2)^T $ where $ D $ is non-negative definite.
\end{defn}

\subsection{Diagonal dispersion matrix}\label{s:diagonalD}
To start we first assume additionally that $ d_2=0 $ which means that all components of the random coefficients are uncorrelated, so that the dispersion matrix
\[D=\begin{pmatrix}
d_0&& 0\\
0 && d_1 I_K\\
\end{pmatrix}\] 
of the random coefficients $b_i$ is a diagonal matrix.
Hence, the variance of each design point is equal to $\sigma^2(x)=d_0+ d_1 \sum_{j=1}^K x_j^2$. 
In \cite{GDHS2012} it is shown that uniform full factorial $2^K$-designs supported on the points $(\pm x_1,....,\pm x_K)$ are $D$-optimal. It holds that $ x_1=\ldots= x_K=x^* $ is optimal and it depends on the values of $ d_0 $ and $ d_1 $ if $ x^*=1 $ or $ x^*<1 $. The designs constitute the orbit generated by $(x_1,....,x_K)$ with respect to the (finite) group of transformations of both sign changes within the factors and permutations of the factors themselves. 
For more details see \cite{GDHS2012}. 

\subsection{Non-diagonal dispersion matrix}
We now assume that $ d_2\neq 0 $.  
To reduce the complexity of the system of polynomial equations and inequalities given by the equivalence theorem, one can apply various methods. One of these methods is to assume a certain design structure. A standard approach is the assumption of symmetry in the design under some group action and a restriction of the design region. This is motivated from the symmetry of $ D $-optimal designs for the situation with $ d_2=0 $ as described above in Section \ref{s:diagonalD}. This applies as the $ D $-criterion $ \log \det (M(\xi)) $ is not affected by these transformations $ g $  as $ \det M(\xi^g) = \det M(\xi) $.
Hence, by convexity the class of invariant designs constitutes an essentially complete class such that search may be restricted to invariant designs.
A particular class of invariant designs are rhombic designs. Let $ \text{Sym}(K) $ denote the permutation group on $ K $ elements and $ \{\pm 1\} $ the permutation group with respect to a global sign change, which is therefore isomorphic to Sym$ (2) $.
\begin{defn}
Let a design $ \xi $ for the given model with support on the space diagonals without the origin and at most two points per space diagonal be a \emph{rhombic design} if it is invariant under the action of $ \text{Sym}(K)\times \{\pm 1\} $ on the design points. 
\end{defn}
This means that we study designs on $ [-1,1]^{K} $, that are invariant under permutations among the entries of each design point and a global sign change with support on the diagonals of maximal length without the origin. There are $ \lfloor \frac{K}{2}\rfloor+1 $  different orbits under this group action, where $ \lfloor z \rfloor $ denotes the integer part of some $ z \in \RR $. We define $ \tilde{K}:= \lfloor \frac{K}{2}\rfloor $. Now, let $ x_\ell $ for $ \ell \in \{ 0,1,\ldots,\tilde{K} \} $ denote the location parameter of each orbit $ \mathcal{O}_\ell(x_\ell) $ with $ 0< x_\ell \le 1 $ and either $ \ell $ or $ K-\ell $ negative signs. The location parameter denotes the absolute value of the entries of the design points in $ \mathcal{O}_\ell(x_\ell) $ as the design points of a rhombic design are restricted to the space diagonals. Let $ N_\ell=2 \binom{K}{\ell} $ for $ \ell \neq \frac{K}{2} $ and  $ N_\ell=\binom{K}{\ell} $ for $ \ell=\frac{K}{2} $. 

Rhombic designs can be characterized as follows: Let $ x_j $ for $ j \in \{0,1,\ldots,\tilde{K}\} $ be a design point with entries of the same absolute value, i.e. $ x_j $ lies on a space diagonal of $ \mathcal{X} $. Let $ \mathcal{O}_j $ be the orbit of $ x_j $ under the action of $ \text{Sym}(K)\times \{\pm 1\} $ and let $ \bar{\xi}_j $ be the uniform design on $ \mathcal{O}_j $ that assigns the proportion $ \frac{1}{N_j} $ to each $ x\in \mathcal{O}_j $. If every orbit $ \mathcal{O}_j $ is attributed with a weight $ w_j \ge 0 $ such that $ \sum_{j=0}^{\tilde{K}}w_j=1 $, then $ \sum_{j=0}^{\tilde{K}} w_j \bar{\xi}_j $ is a rhombic design.    

To formalize the invariance considerations, let $ g $ denote the group action that generates rhombic designs. Then, there exists a matrix $ Q_g $ so that $ f(g(x))=Q_gf(x) $.

Figures \ref{Rhombic2D} and \ref{Rhombic3D} exemplify the two different rhombic design classes that will be studied separately. This distinction is made on the location of the design points. With rhombic vertex designs we refer to rhombic designs, where the support is restricted to the vertices of the hypercube, while non-vertex designs are allowed to have points on both the vertices and the interior or in the interior only. In Figure \ref{Rhombic2D}, the blue points denote the orbit of $ (x_0,x_0) $, with $ 0<x_0\le 1 $, while the red points denote the orbit of $ (x_1,-x_1) $, with $ 0<x_1\le 1 $. Similarly, in Figure~\ref{Rhombic3D}, the blue points denote the orbit of $ (x_0,x_0,x_0) $, with $ 0<x_0\le 1 $, while the red points denote the orbit of $ (x_1,x_1,-x_1) $, with $ 0<x_1\le 1 $. We chose the name \emph{rhombic designs} due to the structure of the design points in Figure \ref{Rhombic2D}.

\begin{figure}
\centering
\subfloat[Vertex rhombic design]{
\includegraphics[scale=0.45]{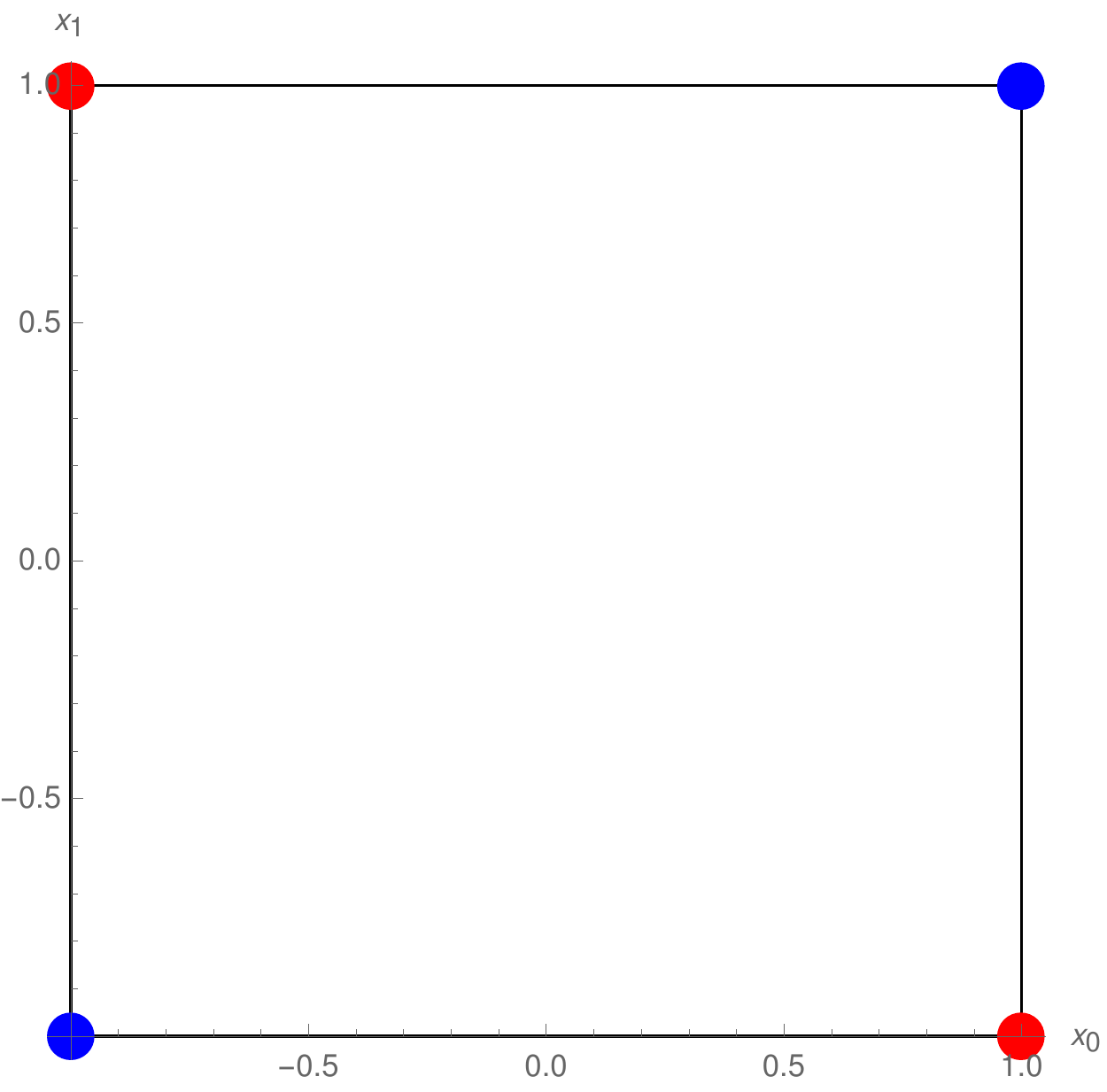}}
~~~~~~~~~~~\hspace{1cm} \subfloat[Non-vertex rhombic design]{
	\includegraphics[scale=0.45]{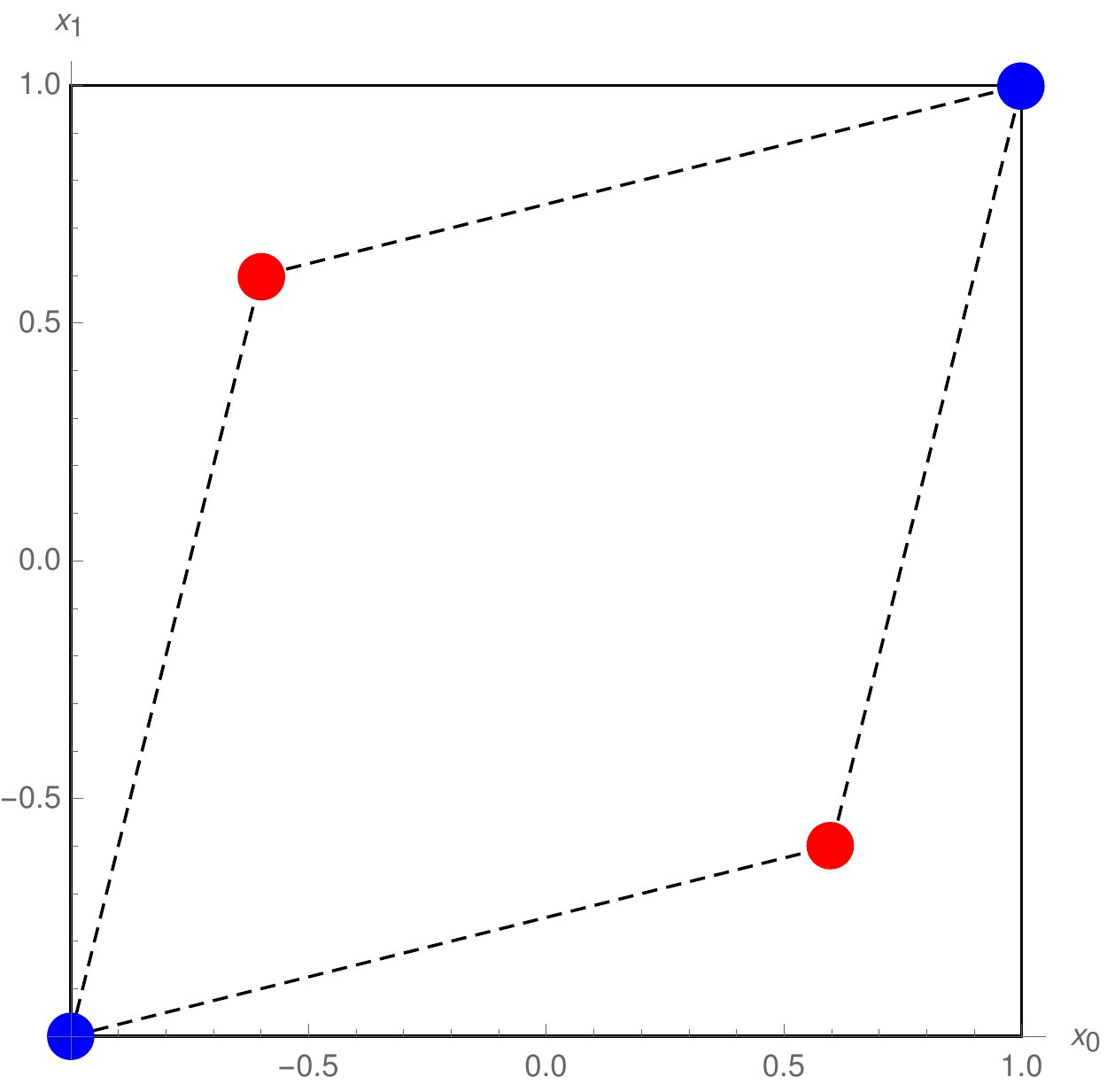}}
\caption{Examples for design points for $ K=2 $.  }\label{Rhombic2D}
\end{figure}

\begin{figure}
\centering
\subfloat[Vertex rhombic design]{\includegraphics[scale=0.45]{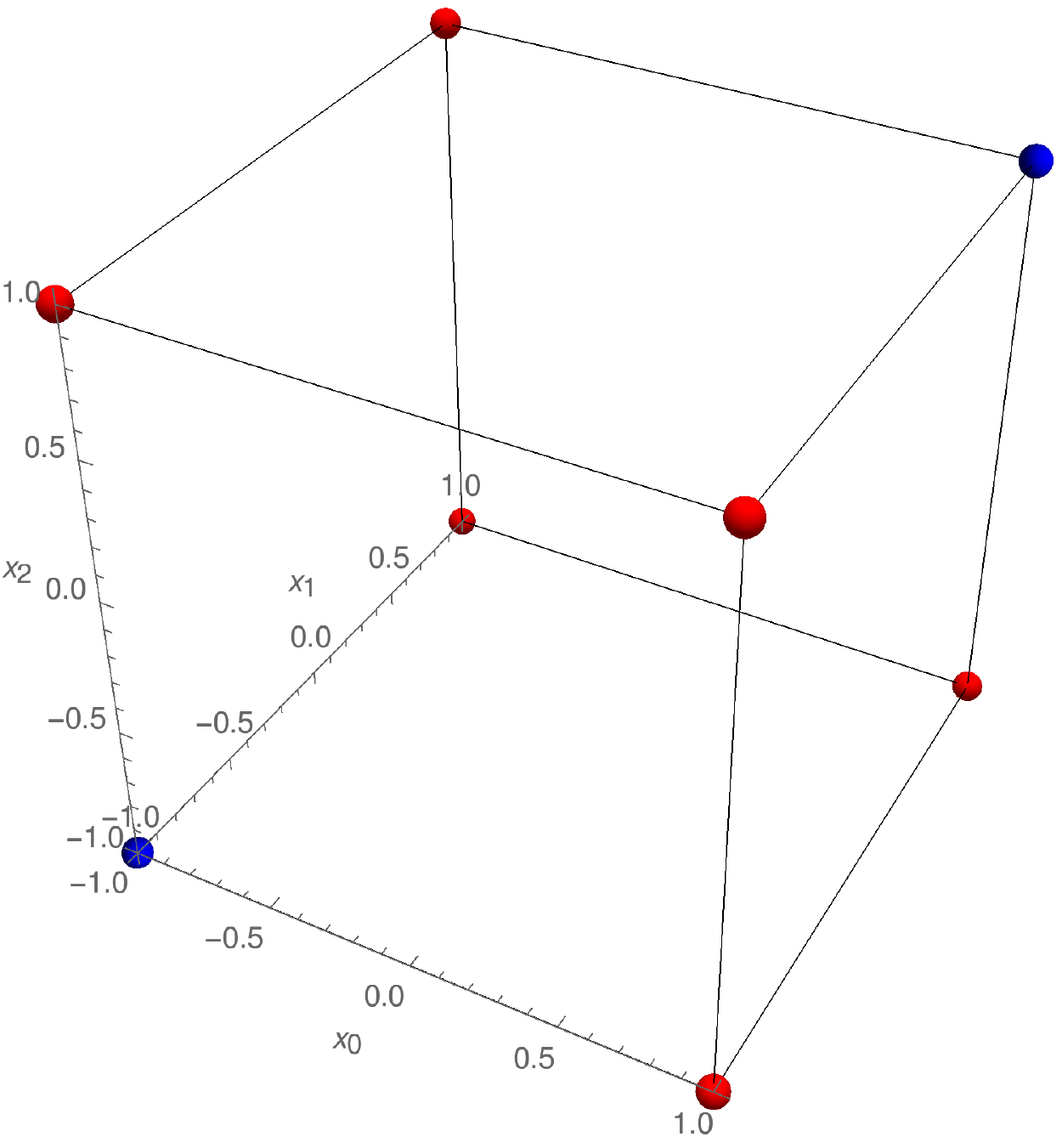}}
~~~~~\hspace{1cm} \subfloat[Non-vertex rhombic design]{
	\includegraphics[scale=0.45]{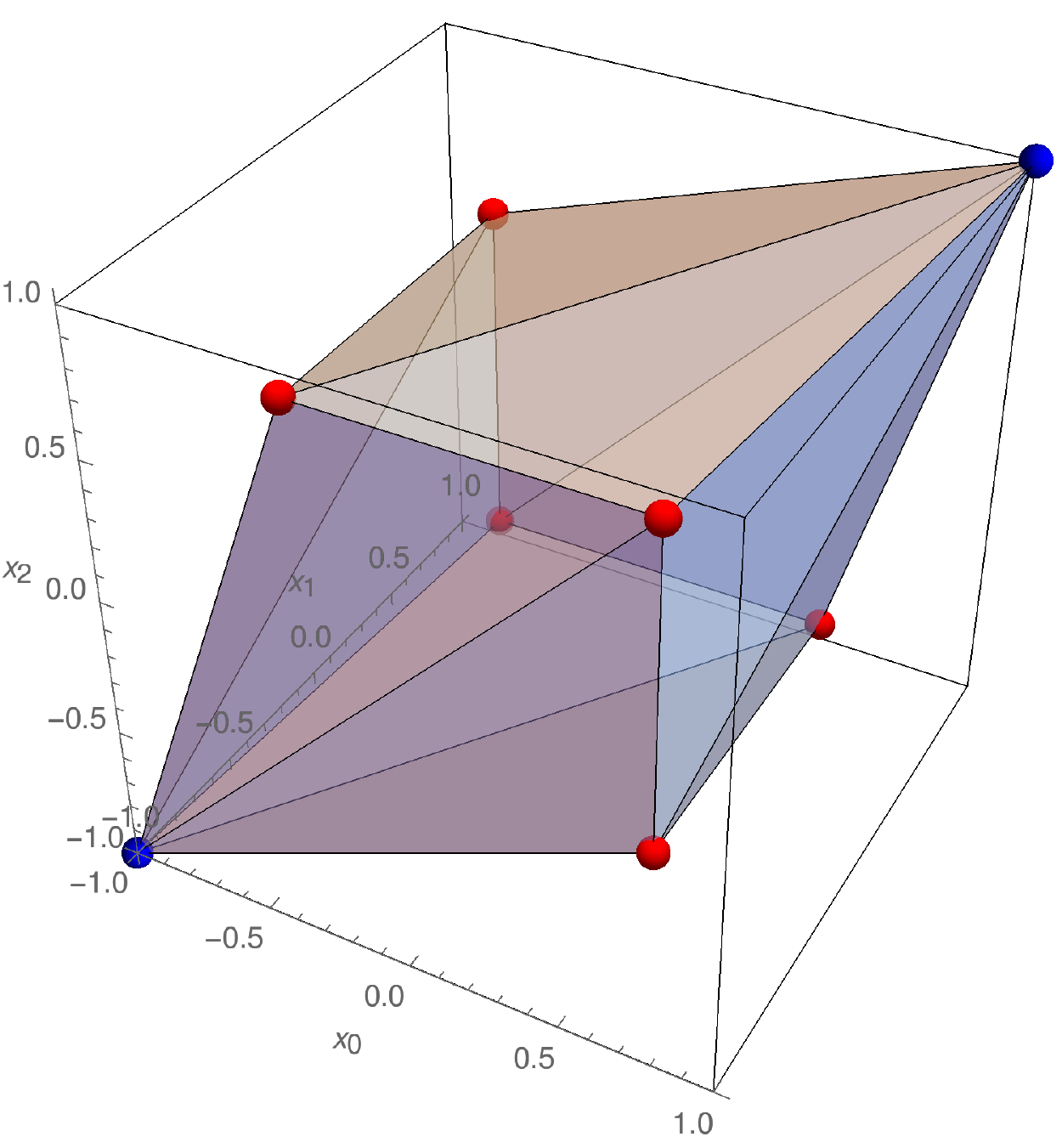}}
\caption{Examples for design points for $ K=3 $. }\label{Rhombic3D}

\end{figure}
The usefulness of rhombic design is mainly due to the complexity reduction that comes from its definition: Instead of finding $ K+1 $ design points each with $ K $ location variables for the entries and a variable for the design weight, we restrict the problem to $ \tilde{K}+1 $ orbits with one weight variable and one location variable per orbit.

\begin{lemma}
The variance function $ \sigma^2(x)=f(x)^T D f(x) $ is equal for all $ x $ in one orbit. 
\end{lemma}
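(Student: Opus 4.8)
The plan is to reduce the statement to the observation that $\sigma^2(x)$ depends on $x$ only through two quantities that are manifestly symmetric and hence constant on each orbit. Concretely, I would first expand the quadratic form using the block structure of $D$ in \eqref{eq:D} together with the completely symmetric shape of $D_1$. Writing $\mathbf{x}=(x_1,\dots,x_K)^T$, so that $f(x)=(1,\mathbf{x}^T)^T$, one gets
\begin{align*}
\sigma^2(x)=f(x)^T D f(x)=d_0+\mathbf{x}^T D_1\mathbf{x}=d_0+(d_1-d_2)\sum_{k=1}^K x_k^2+d_2\Bigl(\sum_{k=1}^K x_k\Bigr)^2,
\end{align*}
where the last step uses $\mathbf{x}^T\mathbbm{1}_K\mathbbm{1}_K^T\mathbf{x}=(\mathbbm{1}_K^T\mathbf{x})^2$. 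Thus $\sigma^2$ is a fixed function of $S_1(x):=\sum_{k=1}^K x_k$ and $S_2(x):=\sum_{k=1}^K x_k^2$, entering as $S_1^2$ and $S_2$ respectively.

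Next I would use that $\mathrm{Sym}(K)\times\{\pm1\}$ is generated by coordinate transpositions and the global sign change $x\mapsto -x$, so that it suffices to verify invariance of the displayed expression under these generators. A transposition of the entries of $x$ leaves both $S_1$ and $S_2$ unchanged; the global sign change fixes $S_2$ and replaces $S_1$ by $-S_1$, which is harmless since $S_1$ occurs only through $S_1^2$. Hence $\sigma^2$ takes the same value at every point of the orbit of any given $x$. Equivalently, letting $Q_g$ be the matrix with $f(g(x))=Q_g f(x)$, the same computation shows $Q_g^T D Q_g=D$ for $g$ in the generating set, and therefore for every $g$ in the group, whence $\sigma^2(g(x))=f(x)^T Q_g^T D Q_g f(x)=f(x)^T D f(x)=\sigma^2(x)$ immediately.

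There is no genuine obstacle here; the only point requiring care is to retain the exact form of $D_1$ so that the equi-correlation term collapses precisely to $(\mathbbm{1}_K^T\mathbf{x})^2$, i.e.\ to the square of $S_1$, since it is exactly this squaring that makes the term survive the global sign change.
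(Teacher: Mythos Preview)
Your argument is correct. The paper's own proof is exactly the short $Q_g$-argument you give at the end: it writes $\sigma^2(g(x))=f(x)^T Q_g^T D Q_g f(x)$ and invokes $Q_g^T D Q_g=D$ from the shape of $D$ in \eqref{eq:D}, without further detail. Your primary route via the explicit expansion $\sigma^2(x)=d_0+(d_1-d_2)S_2(x)+d_2 S_1(x)^2$ is a slightly more concrete alternative that makes the invariance under the generators visible by inspection; it buys you a self-contained verification of the claim $Q_g^T D Q_g=D$, which the paper simply asserts.
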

\begin{proof}
Let $ g $ be the group action from above that generates rhombic designs.
By the form of $ D $ in (\ref{eq:D}), it holds for $ \sigma^{2}(x) $ that
\begin{align*}
\sigma^2(g(x))&=f(g(x))^T D f(g(x))\nonumber \\ 
&=f(x)^T Q_g^T D Q_g f(x).
\end{align*}
  As $ Q_g^T D Q_g =D $, it follows that $ \sigma^2(x)=\sigma^2(g(x)) $.
\end{proof}

The information matrix of a rhombic design is
\begin{align}\label{eq:information}
M(\xi)=\sum_{\ell =0}^{\tilde{K}}\frac{w_\ell }{N_\ell \sigma^2(x_\ell )}\sum_{x \in \mathcal{O}_\ell }f(x)f(x)^T.
\end{align}
To compute the matrix $\sum_{x \in \mathcal{O}_\ell }f(x)f(x)^T $, see that the information matrix is for each orbit structured into a scalar entry in the upper left corner and a $ K\times K $-dimensional lower right completely symmetric block matrix. This leads to the following Lemma:
\begin{lemma}\label{l:InfMatrix}
In the setting of Section \ref{s:Multiplelinregression}, a rhombic design $ \xi $ has an information matrix of the form
\[M(\xi)=\begin{pmatrix}
m_0(\xi)&&0\\
0&&M_1(\xi)\\
\end{pmatrix},\]
where $ m_0(\xi)=\sum_{\ell=0}^{\tilde{K}}\frac{w_\ell}{\sigma^2(x_\ell)} $ and $ M_1(\xi)=\sum_{\ell=0}^{\tilde{K}}\frac{w_\ell}{N_\ell\sigma^2(x_\ell)}\sum_{x \in \mathcal{O}_\ell}xx^T= (m_1(\xi)-m_2(\xi))I_K+m_2(\xi) \mathbbm{1}_{K}\mathbbm{1}_{K}^T $ is a completely symmetric $ K\times K $-dimensional matrix.
\end{lemma}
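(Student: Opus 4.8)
The plan is to substitute the block decomposition of $f(x)f(x)^{T}$ into the representation \eqref{eq:information} and to evaluate the three resulting blocks one orbit at a time, using only that each $\mathcal{O}_\ell$ consists of $N_\ell$ points lying on a space diagonal and is invariant under $\text{Sym}(K)\times\{\pm 1\}$. Concretely, writing $x=(x_1,\dots,x_K)^{T}$ and $f(x)=(1,x^{T})^{T}$,
\[
f(x)f(x)^{T}=\begin{pmatrix} 1 & x^{T}\\ x & xx^{T}\end{pmatrix},
\]
so that $\sum_{x\in\mathcal{O}_\ell}f(x)f(x)^{T}$ breaks into the scalar $\sum_{x\in\mathcal{O}_\ell}1=N_\ell$, the off-diagonal vector $\sum_{x\in\mathcal{O}_\ell}x$, and the $K\times K$ block $\sum_{x\in\mathcal{O}_\ell}xx^{T}$.

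First I would dispose of the off-diagonal vector: the subgroup $\{\pm 1\}$ acts on each orbit by $x\mapsto -x$, and since a rhombic design excludes the origin this is a fixed-point-free involution of $\mathcal{O}_\ell$, so the orbit splits into couples $\{x,-x\}$ and $\sum_{x\in\mathcal{O}_\ell}x=0$. Plugging this and the count $N_\ell$ into \eqref{eq:information} then gives the claimed block-diagonal shape at once, with upper-left entry $\sum_{\ell=0}^{\tilde{K}}\frac{w_\ell}{N_\ell\sigma^2(x_\ell)}N_\ell=\sum_{\ell=0}^{\tilde{K}}\frac{w_\ell}{\sigma^2(x_\ell)}=m_0(\xi)$ and lower-right block $M_1(\xi)=\sum_{\ell=0}^{\tilde{K}}\frac{w_\ell}{N_\ell\sigma^2(x_\ell)}\sum_{x\in\mathcal{O}_\ell}xx^{T}$.

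Next I would show that $M_1(\xi)$ is completely symmetric. The key point is that $\mathcal{O}_\ell$ is stable under every coordinate permutation $\pi\in\text{Sym}(K)$: the $(i,j)$ entry of $\sum_{x\in\mathcal{O}_\ell}xx^{T}$ equals $\sum_{x\in\mathcal{O}_\ell}x_ix_j$, and reindexing by the permuted point (which again ranges over $\mathcal{O}_\ell$) shows this entry is invariant under simultaneously permuting $i$ and $j$; hence all diagonal entries agree, all off-diagonal entries agree, and $\sum_{x\in\mathcal{O}_\ell}xx^{T}=a_\ell I_K+b_\ell\mathbbm{1}_K\mathbbm{1}_K^{T}$. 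A nonnegative linear combination of completely symmetric matrices is again completely symmetric, which yields $M_1(\xi)=(m_1(\xi)-m_2(\xi))I_K+m_2(\xi)\mathbbm{1}_K\mathbbm{1}_K^{T}$ with $m_1(\xi)$ and $m_2(\xi)$ the resulting common diagonal and off-diagonal values. I do not expect a genuine obstacle here: the argument is bookkeeping plus two invariance observations. The only delicate points are the fixed-point-freeness of $x\mapsto-x$ on each orbit — which is exactly why rhombic designs are defined without the origin — and the step from permutation invariance to complete symmetry; an alternative route to the whole statement is to invoke $Q_g^{T}M(\xi)Q_g=M(\xi)$ for the generating transformations $g$, with $Q_g$ block-diagonal, and read off the two blocks from the block form of $D$ in \eqref{eq:D}, exactly as in the proof of the preceding lemma.
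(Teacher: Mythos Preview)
Your argument is correct and follows essentially the same route as the paper: you exploit the pairing $x\leftrightarrow -x$ on each orbit to kill the off-diagonal block and obtain the block-diagonal form, and then use the $\text{Sym}(K)$-invariance of each orbit to deduce that $\sum_{x\in\mathcal{O}_\ell}xx^{T}$, and hence $M_1(\xi)$, is completely symmetric. The paper phrases the first step via the identity $f(x)f(x)^{T}+f(-x)f(-x)^{T}=2\begin{pmatrix}1&0\\0&xx^{T}\end{pmatrix}$ rather than summing $x$ directly, but this is the same computation.
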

\begin{proof}
As for every $ x \in [-1,1]^{K} $, $ -x $ lies in the same orbit as $ x $. Therefore, as
\begin{align*}
f(x)f(x)^T+f(-x)f(-x)^T &=2\begin{pmatrix}
1&&0\\
0&&xx^T\\
\end{pmatrix},
\end{align*}
$ M(\xi) $ is of the form
\[M(\xi)=\begin{pmatrix}
m_0(\xi)&&0\\
0&&M_1(\xi)\\
\end{pmatrix},\]
where 
\[ M_1(\xi)=\sum_{\ell=0}^{\tilde{K}}\frac{w_\ell}{N_\ell\sigma^2(x_\ell)}\sum_{x \in \mathcal{O}_\ell}xx^T \]
is a $ K\times K $-dimensional symmetric matrix and
\[m_0(\xi)=\sum_{\ell=0}^{\tilde{K}}\frac{w_\ell}{\sigma^2(x_\ell)}.\]
Now, $ \sum_{x \in \mathcal{O}_\ell}xx^T $ is completely symmetric,because of the permutation invariance of the orbit $ \mathcal{O}_\ell $. As the (weighted) sum of completely symmetric matrices is completely symmetric itself, the Lemma follows.
\end{proof}

We defined $ \Gamma(\xi) $ as the matrix in the quadratic form in (\ref{GHS:EQUI1}) coming from the equivalence theorem. By Lemma \ref{l:InfMatrix}, denoting the lower right $ K\times K $-submatrix of $ D $ by $ D_1 $,
\[\Gamma(\xi)=\begin{pmatrix}
(K+1)d_0-m_0(\xi)^{-1}&&0\\
0&&(K+1)D_1-M_1(\xi)^{-1}\\
\end{pmatrix}. \]
$ \Gamma(\xi) $ has the same block structure as $ D $ and $ M(\xi) $ with completely symmetric lower right block of dimension $ K\times K $, so
\begin{align*}\label{def:Gamma}
\Gamma(\xi) &= \begin{pmatrix}
\gamma_0&&0\\
0&&(\gamma_1-\gamma_2)I_K+\gamma_2 \mathbbm{1}_{K}\mathbbm{1}_{K}^T\\
\end{pmatrix}
\end{align*}
with $ \gamma_0=(K+1)d_0-m_0(\xi)^{-1}. $

We remind the reader of the following well-known fact:

\noindent
The inverse of a completely symmetric $ K\times K $ matrix $ A $ with
	\begin{align*}
	A=&(a_1-a_2)I_K+a_2 \mathbbm{1}_{K}\mathbbm{1}_{K}^T \\
	\intertext{is}
	A^{-1}=&\frac{1}{a_1-a_2}I_K-\frac{a_2}{(a_1-a_2)(a_1+(K-1)a_2)} \mathbbm{1}_{K}\mathbbm{1}_{K}^T. 
	\end{align*}

With these preparatory results, we are able to investigate the optimality of rhombic designs.

\section{Rhombic Designs and the Equivalence Theorem}\label{s:RhombicEquivalenceTheorem}

\subsection{Rhombic Vertex Designs}

This section studies rhombic designs with all design points on the vertices of the hypercube. We will use the Kiefer-Wolfowitz equivalence theorem to investigate how the $ D $-optimality of a rhombic vertex design depends on $ D $. 
The investigation leads to the following theorem:
\begin{thm}\label{t:vertex}
For the given model \ref{eq:model} with a dispersion matrix as defined in \ref{eq:D}, let $ \xi^* $ be a rhombic vertex design. If either
\begin{enumerate}
	\item $ K $ is even or
	\item $ K $ is odd and $ \xi^* $ has support not only on $ \mathcal{O}_{\tilde{K}} $,
\end{enumerate} 
then $ \xi^* $ is $ D $-optimal if and only if the matrix
$ \Gamma(\xi^*)=pD-M(\xi^*)^{-1} $
is a diagonal matrix
\[\Gamma(\xi^*)=\begin{pmatrix}
\gamma_0 && 0 \\
0 && \gamma_1 I_K\\
\end{pmatrix},\]
with $ \gamma_0\ge 0 $ and $ \gamma_1=-\frac{\gamma_0}{K} $. 
\end{thm}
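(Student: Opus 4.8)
The plan is to apply the equivalence theorem (Theorem~\ref{t:equivalence}), in the rearranged form \eqref{GHS:EQUI1} together with Corollary~\ref{c:silvey}, to the specific situation of a rhombic vertex design. Recall that $D$-optimality of $\xi^*$ is equivalent to $\psi(x;\xi^*)=f(x)^T\Gamma(\xi^*)f(x)\ge 0$ for all $x\in\mathcal{X}=[-1,1]^K$, and that this quadratic form vanishes at every support point. For a rhombic vertex design, the support consists of whole orbits $\mathcal{O}_\ell$ of vertices $(\pm 1,\dots,\pm 1)$ with exactly $\ell$ (or $K-\ell$) sign changes, so for each support point $x$ one has $x_k\in\{-1,1\}$, hence $x_k^2=1$ for all $k$. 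Writing $\Gamma(\xi^*)$ in the completely-symmetric block form with entries $\gamma_0,\gamma_1,\gamma_2$ established just before Section~\ref{s:RhombicEquivalenceTheorem}, a direct expansion gives
\[
\psi(x;\xi^*)=\gamma_0+(\gamma_1-\gamma_2)\sum_{k=1}^K x_k^2+\gamma_2\Bigl(\sum_{k=1}^K x_k\Bigr)^2
=\gamma_0+(\gamma_1-\gamma_2)K+\gamma_2 s^2,
\]
where $s:=\sum_{k=1}^K x_k$ depends only on the orbit: if $x\in\mathcal{O}_\ell$ then $s=K-2\ell$, so $s^2=(K-2\ell)^2$.

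The ``if'' direction is then immediate: if $\Gamma(\xi^*)$ is diagonal with $\gamma_2=0$, $\gamma_1=-\gamma_0/K$ and $\gamma_0\ge 0$, the expression above reduces to $\psi(x;\xi^*)=\gamma_0+(\gamma_1-\gamma_2)K=\gamma_0-\gamma_0=0$ at every vertex, and at an arbitrary $x\in[-1,1]^K$ it equals $\gamma_0+\gamma_1\sum x_k^2=\gamma_0(1-\tfrac1K\sum x_k^2)\ge 0$ since $\sum x_k^2\le K$; so the equivalence theorem certifies $D$-optimality.

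For the ``only if'' direction, suppose $\xi^*$ is $D$-optimal. By Corollary~\ref{c:silvey}, $\psi(x;\xi^*)=0$ at every support point; since $\psi(\cdot;\xi^*)$ on the vertices takes the value $\gamma_0+(\gamma_1-\gamma_2)K+\gamma_2(K-2\ell)^2$ on $\mathcal{O}_\ell$, the hypotheses (1)/(2) are exactly what guarantees that the support meets at least two distinct orbits $\ell\ne\ell'$ with $(K-2\ell)^2\ne(K-2\ell')^2$ — this is automatic when $K$ is even (where $\mathcal{O}_{K/2}$ has $s=0$ and there is always another orbit present, the distinct squared values ruling out a single-orbit support because then $p=K+1>1$ forces more than one support point... more carefully, a single orbit cannot carry a nonsingular information matrix for $K\ge1$, which is where evenness versus oddness enters, the orbit $\mathcal{O}_{\tilde K}$ being the one that can be singular), and when $K$ is odd it is imposed by assumption (2). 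Two such equations $\gamma_0+(\gamma_1-\gamma_2)K+\gamma_2 s^2=0$ with different $s^2$ force $\gamma_2=0$ and hence $\gamma_0+\gamma_1 K=0$, i.e. $\gamma_1=-\gamma_0/K$; then $\Gamma(\xi^*)$ is diagonal with the claimed off-diagonal block $\gamma_1 I_K$. Finally $\gamma_0\ge 0$ follows by evaluating $\psi$ at $x=0$ (the origin lies in $\mathcal{X}$), where $\psi(0;\xi^*)=\gamma_0\ge 0$ by \eqref{GHS:EQUI1}.

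The main obstacle is the bookkeeping in the ``only if'' direction: one must argue carefully that under hypotheses (1) or (2) the support of $\xi^*$ genuinely hits at least two orbits with distinct values of $(K-2\ell)^2$. This is where the parity of $K$ and the special role of the middle orbit $\mathcal{O}_{\tilde K}$ (which for odd $K$ has $s^2=1$ and for even $K$ has $s^2=0$) come in, and it requires noting that a rhombic design supported on a single orbit cannot have a nonsingular information matrix in the relevant cases — otherwise $M(\xi^*)$ is singular, $\Gamma(\xi^*)$ is undefined, and the design is not $D$-optimal. Once that combinatorial fact is nailed down, the rest is the elementary linear algebra of the completely-symmetric quadratic form carried out above.
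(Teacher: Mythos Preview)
Your setup and the ``if'' direction match the paper's argument essentially verbatim, and your treatment of the two-orbit case in the ``only if'' direction is also the same: two equalities $\gamma_0+K\gamma_1+\gamma_2(K-2\ell)^2=0$ with distinct values of $(K-2\ell)^2$ force $\gamma_2=0$ and then $\gamma_1=-\gamma_0/K$. Evaluating at $x=0$ to get $\gamma_0\ge 0$ is fine (the paper uses the equivalent observation that $\psi(x;\xi^*)=\gamma_0(1-\|x\|^2/K)\ge 0$).

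The gap is in the single-orbit case. You assert that ``a rhombic design supported on a single orbit cannot have a nonsingular information matrix in the relevant cases,'' and this is false. For $1\le \ell<\tilde K$ the uniform design on $\mathcal{O}_\ell(1)$ has a perfectly nonsingular information matrix: by complete symmetry $M_1$ is proportional to $I_K+\frac{(K-2\ell)^2-K}{K(K-1)}(\mathbbm{1}_K\mathbbm{1}_K^T-I_K)$, whose eigenvalues are $\frac{(K-2\ell)^2}{K}$ and $\frac{K^2-(K-2\ell)^2}{K(K-1)}$, both strictly positive in that range (take $K=4$, $\ell=1$ for a concrete example). So you cannot dismiss these designs as non-$D$-optimal on singularity grounds, and therefore you do not yet have two equalities to pin down $\gamma_2$.

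The paper closes this gap by a different, and necessary, argument: for a single support orbit $\mathcal{O}_\ell$ with $1\le\ell<\tilde K$ one uses the \emph{inequality} part of the equivalence theorem at the neighbouring vertex orbits $\mathcal{O}_{\ell-1}$ and $\mathcal{O}_{\ell+1}$. Since $\ell\mapsto (K-2\ell)^2$ is strictly monotone on $\{0,\dots,\tilde K\}$, if $\gamma_2\ne 0$ then $\psi$ restricted to the vertex orbits is strictly monotone in $\ell$, so one of $\psi(x_{\ell-1};\xi^*)$, $\psi(x_{\ell+1};\xi^*)$ is strictly negative, contradicting $D$-optimality. Only the extreme orbits $\ell=0$ (always) and $\ell=\tilde K$ (for even $K$) give singular information matrices; the odd-$K$ case $\ell=\tilde K$ is precisely what hypothesis~(2) excludes. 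You should replace your singularity claim with this monotonicity/neighbour argument.
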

\begin{proof}
$ "\Leftarrow" $
Assuming $ \gamma_0\ge 0 $, $ \gamma_1=-\frac{\gamma_0}{K} $ and $ \gamma_2=0 $, it follows that 
\begin{align*}
\psi(x;\xi^*)&=f(x)^T\Gamma(\xi^*)f(x)\\
&=\gamma_0+\gamma_1||x||^2\\
&=\gamma_0\left(1-\frac{||x||^2}{K}\right)
\end{align*}
Hence $ \psi(x;\xi^*)\ge 0 $ for all $ x \in [-1,1]^{K} $, as $ ||x||^2 \le K $. Therefore, the $ D $-optimality follows from  Theorem~\ref{t:equivalence}.

\noindent
$ "\Rightarrow" $
According to (\ref{GHS:EQUI1}) in Theorem \ref{t:equivalence}, a design $ \xi^* $ is $ D $-optimal if and only if 
\begin{align}\label{eq:directionalderivative}
\psi(x;\xi^*)&=f(x)^T \Gamma(\xi^*) f(x)\ge 0 
\end{align}
for all $ x \in \mathcal{X} $. Furthermore, by Corollary \ref{c:silvey}, we know that
\begin{align}
\psi(x;\xi^*)&= 0 \label{eq:vertexequivalence}
\end{align}
for all support points of $ \xi^* $.
Now, by (\ref{eq:vertexequivalence}), it follows for any design point $ x_\ell \in \mathcal{O}_\ell(1) $, that
\begin{align} \label{eq:directionalderivative2}
\psi(x_\ell,\xi^*)&=\gamma_0+K\gamma_1+(K(K-1)-4\ell(K-\ell)) \gamma_2=0.
\end{align}
Assuming that the design is supported on at least two different orbits, this directly implies that $ \gamma_2 $ equals zero, as $ (K(K-1)-4\ell(K-\ell)) $ is strictly monotone for $ 0\le \ell \le \tilde{K} $.

Now, say that $ \xi^* $ is only supported on a single orbit $ \mathcal{O}_\ell $ with $ 0\le \ell \le \tilde{K} $ for even $ K $ and $ 0\le \ell < \tilde{K} $ for odd $ K $. If $ \ell=0 $, it is easy to see that the information matrix is singular, therefore such a design cannot be $ D $-optimal. The same is true for even $ K $ when $ \ell=\tilde{K} $ using the same argument as in \cite{Freise2020}.
It holds that $ \psi(x_\ell,\xi^*)=0 $ for all $ x_\ell\in \mathcal{O}_\ell(1)  $. From the $ D $-optimality of $ \xi^* $ it follows that $ \psi(x_{\ell-1},\xi^*)\ge 0 $ for $ x_{\ell-1}\in \mathcal{O}_{\ell-1}(1) $ and  $ \psi(x_{\ell+1},\xi^*)\ge 0 $ for $ x_{\ell+1}\in \mathcal{O}_{\ell+1}(1) $, so in the orbits with one less or one more negative entry in the vector. If $ \gamma_2\neq 0 $, this would imply  $ \psi(x_{\ell-1},\xi^*)>\psi(x_{\ell},\xi^*)>\psi(x_{\ell+1},\xi^*) $ or  $ \psi(x_{\ell+1},\xi^*)>\psi(x_{\ell},\xi^*)>\psi(x_{\ell-1},\xi^*) $. This contradicts the assumed $ D $-optimality of $ \xi^* $, therefore $ \gamma_2=0 $ holds.

It follows from (\ref{eq:directionalderivative2}) and $ \gamma_2=0 $ that
\begin{align*}
\gamma_1&=-\frac{\gamma_0}{K}.
\end{align*}
This implies that
\[ \psi(x;\xi^*)=\gamma_0\left(1-\frac{||x||^2}{K}\right) .\]
and therefore $ \gamma_0\ge 0 $ when the design on the vertices is $ D $-optimal.
\end{proof}

Note that $ \psi(x,\xi^*)=0 $ for all vertices $ x $ of the hypercube as for those we have $ ||x||^2=K $.

\begin{cor}\label{c:vertex}
A rhombic vertex design with support on either at least two orbits or an orbit $ \mathcal{O}_\ell(1) $ with $ 1\le \ell <\tilde{K} $ can only be $D$-optimal when
\[(d_1-d_2)(d_1+(K-1)d_2)-d_0(d_1+(K-2)d_2)\le 0\]
\end{cor}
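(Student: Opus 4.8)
The plan is to convert the abstract characterization in Theorem~\ref{t:vertex} into an explicit scalar identity and then control the sign of the left-hand side of the claimed inequality. First I would observe that the hypotheses of the corollary place $\xi^*$ under case~(1) or~(2) of Theorem~\ref{t:vertex} (having support on at least two orbits, or on a single orbit $\mathcal{O}_\ell(1)$ with $\ell<\tilde K$, means in particular that the support is not confined to $\mathcal{O}_{\tilde K}$), so $D$-optimality of $\xi^*$ forces $\Gamma(\xi^*)=\mathrm{diag}(\gamma_0,\gamma_1 I_K)$ with $\gamma_0\ge 0$ and $\gamma_1=-\gamma_0/K$. Consequently $M(\xi^*)^{-1}=(K+1)D-\Gamma(\xi^*)$ is block diagonal, with scalar part $(K+1)d_0-\gamma_0$ and completely symmetric lower block $(K+1)D_1+\frac{\gamma_0}{K}I_K$; inverting the two blocks with the formula for the inverse of a completely symmetric matrix recalled at the end of Section~\ref{s:Multiplelinregression} yields $m_0(\xi^*)=\bigl((K+1)d_0-\gamma_0\bigr)^{-1}$ together with an explicit rational expression for the common diagonal entry of $M_1(\xi^*)$ in terms of $d_0,d_1,d_2$ and $\gamma_0$.

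The structural input I would then add is that every rhombic \emph{vertex} design satisfies $m_1(\xi)=m_0(\xi)$: since each support point is a vertex, $x_j^2=1$, so every diagonal entry of $\sum_{x\in\mathcal{O}_\ell}xx^T$ equals $N_\ell$, whence by Lemma~\ref{l:InfMatrix} each diagonal entry of $M_1(\xi)$ equals $\sum_\ell\frac{w_\ell}{N_\ell\sigma^2(x_\ell)}N_\ell=\sum_\ell\frac{w_\ell}{\sigma^2(x_\ell)}=m_0(\xi)$. Equating the two expressions for $m_0(\xi^*)$ and clearing denominators (using the identity $(d_1-d_2)+(K-1)(d_1+(K-1)d_2)=K(d_1+(K-2)d_2)$) should collapse everything to the clean relation
\[
(d_1-d_2)(d_1+(K-1)d_2)-d_0(d_1+(K-2)d_2)=\frac{g(R-g)}{K+1},
\]
where $g:=\gamma_0/K\ge 0$ and $R:=d_0-(K+2)d_1-(K-2)(K+1)d_2$. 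This reduction is the main computational step: it is routine but a little lengthy, and the cleanest bookkeeping is to carry the two eigenvalues $d_1-d_2$ and $d_1+(K-1)d_2$ of $D_1$ throughout.

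To conclude I would argue by contradiction. If the left-hand side above were $>0$, then $g(R-g)>0$, and since $g\ge 0$ this forces $0<g<R$, so in particular $d_0>(K+2)d_1+(K-2)(K+1)d_2=:R_0$. At this point the optimality equation ceases to decide the sign, and the argument must switch to a purely model-cone estimate: I would show $R_0\,(d_1+(K-2)d_2)>(d_1-d_2)(d_1+(K-1)d_2)$ for all $(d_0,d_1,d_2)\in\mathcal{C}_K$. By homogeneity one may normalize $d_1=1$; then $R_0(1+(K-2)d_2)-(1-d_2)(1+(K-1)d_2)$ is a quadratic in $d_2$ with positive leading coefficient whose discriminant computes to $-4(K^2-1)<0$, hence it is positive for all real $d_2$, a fortiori on $[-\frac{1}{K-1},1]$. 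Combining, $d_0(d_1+(K-2)d_2)\ge R_0(d_1+(K-2)d_2)>(d_1-d_2)(d_1+(K-1)d_2)$, contradicting the assumed positivity; therefore the left-hand side is $\le 0$. (One should also record that $d_1+(K-2)d_2>0$ throughout $\mathcal{C}_K$, so the divisions are legitimate, and that in the singular boundary cases $d_1=d_2$ or $d_2=-d_1/(K-1)$ the left-hand side equals $-d_0(d_1+(K-2)d_2)<0$ outright.) The one genuinely non-obvious point is recognizing that the optimality equation does not by itself pin down the sign when $R>0$, so one must invoke the model-cone inequality in that regime; everything else is a short calculation.
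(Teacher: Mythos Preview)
Your proposal is correct and takes a genuinely different route from the paper. Both arguments start from the characterization in Theorem~\ref{t:vertex} ($\gamma_0\ge 0$, $\gamma_1=-\gamma_0/K$, $\gamma_2=0$), but diverge from there. The paper feeds the system $\{\gamma_1=-\gamma_0/K,\ \gamma_2=0\}$ (together with the vertex constraint) into \textsc{Mathematica}, obtains the two explicit roots $m_0^{\pm}$, and then reduces $\gamma_0\ge 0$ to the sign of the product $\bigl(pd_0-1/m_0^{+}\bigr)\bigl(pd_0-1/m_0^{-}\bigr)$, asserting separately that the second factor is negative on $\mathcal{C}_K$. You instead make the vertex constraint $m_1(\xi)=m_0(\xi)$ explicit, invert the completely symmetric lower block by hand, and collapse everything to the single scalar identity
\[
(d_1-d_2)(d_1+(K-1)d_2)-d_0(d_1+(K-2)d_2)=\frac{g(R-g)}{K+1},\qquad g=\gamma_0/K\ge 0,
\]
from which a short contradiction argument using the discriminant $-4(K^2-1)<0$ finishes. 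Your approach is more elementary and self-contained: it avoids computer algebra entirely and replaces the unproved assertion that $pd_0-1/m_0^{-}<0$ on $\mathcal{C}_K$ by a transparent one-variable discriminant computation. The paper's approach, on the other hand, is more systematic in the sense that it treats the two scalar conditions as a generic polynomial system and lets the algebra software extract the roots; this may generalize more mechanically to other dispersion structures. The two are really the same quadratic seen from two angles: the paper works with its roots, you work with the product identity directly.
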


\begin{proof}
	From Theorem \ref{t:vertex} we obtain that
	\begin{align*}
	\gamma_0\ge 0,~~~~~	\gamma_1&=-\frac{\gamma_0}{K},~~~~~ \gamma_2=0 
	\end{align*} 
	is equivalent to the $D$-optimality of a rhombic vertex design with support on at least two orbits or an orbit $ \mathcal{O}_\ell(1) $ with $ 1\le \ell <\tilde{K} $. The equation system $ \{\gamma_1=-\frac{\gamma_0}{K},~~~~~ \gamma_2=0\} $ has two solutions $ m_0(\xi)^{\pm} $ for $ m_0(\xi) $ in dependence of $ d_0,d_1,d_2 $ and $ p $ that we obtained with \Mathematica:
	\begin{multline*}
	m_0^{\pm}(\xi)=\frac{d_0 (p+1)+(p-1) \left(d_1 p+d_1+d_2 p^2-3 d_2 p\right)}{2 p \left(d_0^2+d_0 (p-1) (2 d_1+d_2 (p-3))+(p-1)^2 (d_1-d_2) (d_1+d_2 (p-2))\right)}\\ \pm   \frac{(p-1) \sqrt{d_0^2+2 d_0 (d_1 (p-1)+d_2 (p-3) p)+(p-1) \left(d_1^2 (p-1)+2 d_1 d_2 (p-3) p+d_2^2 p \left(p^2-5 p+8\right)\right)}}{2 p \left(d_0^2+d_0 (p-1) (2 d_1+d_2 (p-3))+(p-1)^2 (d_1-d_2) (d_1+d_2 (p-2))\right)}.
	\end{multline*}
	Now, with $ \gamma_0= p d_0 - \frac{1}{m_0(\xi)} $ we see that
	\begin{align*}
		\Leftrightarrow 0&\ge\left(p d_0 - \frac{1}{m_0^+(\xi)}\right)\left(p d_0 - \frac{1}{m_0^-(\xi)}\right)\\
	\Leftrightarrow 0	&\ge p^2d_0^2m_0^+(\xi)m_0^-(\xi)-(m_0^+(\xi)+m_0^-(\xi))+1\\
	\Leftrightarrow 0&\ge (d_1-d_2)(d_1+(K-1)d_2)-d_0(d_1+(K-2)d_2).
	\end{align*} 
To derive the corollary, check that $ p d_0 - \frac{1}{m_0^-(\xi)} $ is always negative on $ \mathcal{C}_K $, so that 
\begin{align*}
0&\le p d_0 - \frac{1}{m_0(\xi)}\\
\Leftrightarrow 0&\le p d_0 - \frac{1}{m_0^+(\xi)}\\
\Leftrightarrow 0&\ge \left(p d_0 - \frac{1}{m_0^+(\xi)}\right)\left(p d_0 - \frac{1}{m_0^-(\xi)}\right)
\end{align*}
on $ \mathcal{C}_K $. This implies the corollary.
\end{proof}

\begin{remark}\label{r:vertex}
Theorem \ref{t:vertex} gives a semi-algebraic description of the optimality region of rhombic vertex design in the design weights and the coefficients $ d_0,d_1 $ and $ d_2 $. This means that 
\[ \left\{\gamma_0\ge 0, \gamma_1 =-\frac{\gamma_0}{K}, \gamma_2=0\right\} \] 
can be interpreted as a semi-algebraic set if one takes into account the constraints that $ \xi $ is a design and $ (d_0,d_1,d_2)^T \in \mathcal{C}_K $.
This allows us to obtain symbolic solutions for the design weights $ \xi^*_\ell:=\xi^*(\mathcal{O}_\ell(1)) $ in dependence of the coefficients $ d_0,d_1 $ and $ d_2 $.
\end{remark}

\subsection{Non-vertex (rhombic) designs}

Instead of restricting to rhombic designs we will discuss a broader class of designs, namely designs with a design point in the interior of the hypercube. This design class naturally includes non-vertex rhombic designs.  

\begin{thm}\label{t:non-vertex}
A design $ \xi^* $ with at least one design point in the interior of the hypercube is $ D $-optimal if and only if $ M(\xi^*)=\frac{1}{K+1}D^{-1} $, which means that  $ \Gamma(\xi^*)=(K+1)D-M(\xi^*)^{-1} $ is zero.
\end{thm}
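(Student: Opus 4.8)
The plan is to treat the two implications separately; all the substance is in the ``only if'' direction. For ``if'', note that $M(\xi^*)=\tfrac{1}{K+1}D^{-1}$ is exactly the statement $\Gamma(\xi^*)=pD-M(\xi^*)^{-1}=0$, so the transformed sensitivity function $\psi(x;\xi^*)=f(x)^{T}\Gamma(\xi^*)f(x)$ vanishes identically; in particular $\psi(x;\xi^*)\ge 0$ on $\mathcal{X}$, and $D$-optimality follows at once from Theorem~\ref{t:equivalence} in the form \eqref{GHS:EQUI1}. The hypothesis on the interior support point is not used here.

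For ``only if'', suppose $\xi^*$ is $D$-optimal and let $x_0\in(-1,1)^{K}$ be one of its support points lying in the open cube. I would write $\Gamma(\xi^*)$ in block form with scalar $(1,1)$-entry $a$, off-diagonal block $b\in\RR^{K}$ and lower-right $K\times K$ block $C$, so that $\psi(x;\xi^*)=a+2b^{T}x+x^{T}Cx$ is a quadratic polynomial in $x$ with Hessian $2C$. From \eqref{GHS:EQUI1} one has $\psi(\,\cdot\,;\xi^*)\ge 0$ on $[-1,1]^{K}$, and from Corollary~\ref{c:silvey} one has $\psi(x_0;\xi^*)=0$. Since $x_0$ is interior, it is a local minimizer of $\psi(\,\cdot\,;\xi^*)$ over $\RR^{K}$, and a local minimizer of a quadratic is a global one, whence $\nabla\psi(x_0;\xi^*)=0$ and $C\succeq 0$; completing the square then gives $\psi(x;\xi^*)=(x-x_0)^{T}C(x-x_0)$ for every $x\in\RR^{K}$.

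The crux is to deduce $C=0$. Every support point $x'$ of $\xi^*$ satisfies $\psi(x';\xi^*)=0$ by Corollary~\ref{c:silvey}, hence $(x'-x_0)^{T}C(x'-x_0)=0$, and since $C\succeq 0$ this forces $x'-x_0\in\ker C$. Thus $\mathrm{supp}(\xi^*)\subseteq x_0+\ker C$, so the regression vectors $f(x')$ span a subspace of dimension at most $\dim\ker C+1$; if $C\ne 0$ this is at most $K<K+1=p$, which makes $M(\xi^*)=\sum_j w_j f(x_j)f(x_j)^{T}/\sigma^2(x_j)$ singular — impossible for a $D$-optimal design. Hence $C=0$, and then $\nabla\psi(x_0;\xi^*)=0$ forces $b=-Cx_0=0$ while $\psi(x_0;\xi^*)=0$ forces $a=x_0^{T}Cx_0=0$; that is, $\Gamma(\xi^*)=0$, equivalently $M(\xi^*)=\tfrac{1}{K+1}D^{-1}$.

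I expect the only place needing care to be this last step: one must invoke that a $D$-optimal design has a nonsingular information matrix (otherwise $\log\det M(\xi^*)=-\infty$, whereas designs with nonsingular information exist, e.g.\ the full $2^{K}$ factorial), and one should check that the interior hypothesis is genuinely exploited — it is, since only for $x_0$ in the open cube does $D$-optimality yield a vanishing gradient at $x_0$, whereas a support point on a lower-dimensional face would supply only one-sided directional inequalities. It is worth remarking that the argument uses nothing about the special structure \eqref{eq:D} of $D$ beyond the mere existence of a support point in the open cube.
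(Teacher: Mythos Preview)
Your proof is correct and takes a genuinely different route from the paper's. The paper does not use the local-minimum interpretation at the interior support point; instead it selects $K+1$ affinely independent support points $x_1,\dots,x_{K+1}$ (guaranteed by nonsingularity of $M(\xi^*)$) with $x_1$ interior, restricts $\psi$ to the line through $x_1$ and each $x_j$, and argues that a univariate quadratic which vanishes at two points, is nonnegative on the segment between them, and is also nonnegative slightly beyond $x_1$ (this last uses interiority of $x_1$) must be identically zero; this is then ``recursively extended'' to the whole affine span. Your approach is more direct: from $\psi\ge 0$ near $x_0$ and $\psi(x_0)=0$ you extract $\nabla\psi(x_0)=0$ and $C\succeq 0$ at once, rewrite $\psi(x)=(x-x_0)^{T}C(x-x_0)$, and then the rank argument on $M(\xi^*)$ forces $C=0$ in a single step, bypassing the line-by-line recursion. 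Both arguments rely only on $\psi$ being a quadratic polynomial and on nonsingularity of $M(\xi^*)$; neither actually needs the completely symmetric structure of $D$, which is consistent with your closing remark (the paper even writes $\psi=\gamma_0+\gamma_1\|x\|^2+2\gamma_2\sum_{\ell<j}x_\ell x_j$, but this special form plays no role in its subsequent reasoning).
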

\begin{proof}
$ "\Leftarrow"	 $ $ M(\xi^*)=\frac{1}{K+1}D^{-1} $ implies the $ D $-optimality of $ \xi^* $ by Theorem~\ref{t:equivalence}.

\noindent $ "\Rightarrow" $	
By Theorem \ref{t:equivalence} and Corollary \ref{c:silvey}, $ \xi^* $ is $ D $-optimal if and only if
\[\psi(x;\xi^*)=f(x)^T((K+1)D-M(\xi^*)^{-1})f(x)\ge 0 \]
for all $ x \in [-1,1]^{K} $ and 
\[\psi(x;\xi^*)= 0, \]
if $ x $ is a design point of $ \xi^* $. Now, with $  \Gamma(\xi^*)=(K+1)D-M(\xi^*)^{-1} $,
\begin{align*}
\psi(x;\xi^*)=f(x)^T\Gamma(\xi^*) f(x) &=\gamma_0+ \gamma_1 ||x||^2 + 2\gamma_2 \sum_{1\le \ell<j\le K}x_\ell x_j
\end{align*}
is a quadratic polynomial. It holds that
\begin{align}
\gamma_0+ \gamma_1 ||x||^2 + 2\gamma_2 \sum_{1\le \ell<j\le K}x_\ell x_j&=0 \label{eq:non-vertex}
\end{align}
for design points $ x $ of $ \xi^* $. 
As $ \xi^* $ is a non-vertex design there is an interior point $ x_1 $ and additionally K further design points $ x_2,...x_{K+1} $ such that $ x_1,...,x_{K+1} $ span $ \RR^K $ because the information matrix $ M(\xi^*) $ needs to be non-singular. Fix the affine subspace generated by $ x_1 $ and $ x_2 $. It holds that $ \psi(x_1,\xi^ *)=\psi(x_2,\xi^ *)=0 $ and $ \psi(\lambda x_1+(1-\lambda)x_2,\xi^ *)\ge 0 $ for all $ \lambda \in [0,1] $ and additionally for some $ \lambda<0 $ because $ x_1 $ is an interior point of $ [-1,1]^K $. On the affine subspace generated by $ x_1 $ and $ x_2 $, $ \psi(x;\xi^ *) $ is a quadratic polynomial in $ \lambda $. The only quadratic polynomial that is zero on at least two points and non-negative on at least one point on  the line segment between these points as well as for at least one point on the line outside this segment is the zero polynomial. Recursively, this can be extended to higher dimensions. Therefore,
\begin{align*}
f(x)^T((K+1)D-M(\xi^*)^{-1})f(x)\ge 0
\end{align*}
for all $ x\in [-1,1]^K $ can only be achieved as an equation, so 
\begin{align*}
(K+1)D-M(\xi^*)^{-1}=0.
\end{align*}
Hence, the Theorem follows.
\end{proof}

\begin{cor}\label{c:non-vertex}
An invariant design $ \xi^* $ with a design point in the interior of the hypercube can only be $D$-optimal if the first diagonal entry of $ D^{-1} $ is larger than the second, which means that 
\[(d_1-d_2)(d_1+(K-1)d_2)-d_0(d_1+(K-2)d_2)>0 .\]
\end{cor}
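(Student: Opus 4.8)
The plan is to feed the conclusion of Theorem~\ref{t:non-vertex} into an elementary comparison of the first two diagonal entries of the information matrix. If $\xi^*$ has an interior design point and is $D$-optimal, Theorem~\ref{t:non-vertex} gives $M(\xi^*)=\frac{1}{K+1}D^{-1}$, equivalently $D=\frac{1}{K+1}M(\xi^*)^{-1}$; since the information matrix of a $D$-optimal design is non-singular, $D$ is positive definite. By the block form \eqref{eq:D} together with the spectrum of a completely symmetric matrix, positive definiteness of $D$ amounts to $d_0>0$, $d_1-d_2>0$ and $d_1+(K-1)d_2>0$; I would record these sign facts now, as they are what licenses the final manipulation of inequalities.

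Next I would read off the two relevant diagonal entries on each side of $M(\xi^*)=\frac{1}{K+1}D^{-1}$. Using the block structure of $D$ and the recalled formula for the inverse of a completely symmetric matrix, the $(1,1)$-entry of $D^{-1}$ is $1/d_0$, while its $(2,2)$-entry (the first diagonal entry of $D_1^{-1}$) is
\[\frac{1}{d_1-d_2}-\frac{d_2}{(d_1-d_2)(d_1+(K-1)d_2)}=\frac{d_1+(K-2)d_2}{(d_1-d_2)(d_1+(K-1)d_2)}.\]
On the other side, from $M(\xi)=\sum_{x}\xi(x)f(x)f(x)^{T}/\sigma^2(x)$ with $f(x)=(1,x_1,\dots,x_K)^{T}$, the $(1,1)$-entry of $M(\xi^*)$ equals $\sum_{x}\xi^*(x)/\sigma^2(x)$ and the $(2,2)$-entry equals $\sum_{x}\xi^*(x)\,x_1^2/\sigma^2(x)$. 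Every support point lies in $[-1,1]^K$, so $x_1^2\le 1$ in each summand, and the interior support point enters with $x_1^2<1$, positive weight, and $\sigma^2>0$; hence the $(2,2)$-entry of $M(\xi^*)$ is strictly smaller than its $(1,1)$-entry.

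Equating the two descriptions and cancelling the factor $K+1>0$ turns this into
\[\frac{d_1+(K-2)d_2}{(d_1-d_2)(d_1+(K-1)d_2)}<\frac{1}{d_0},\]
that is, the first diagonal entry of $D^{-1}$ strictly exceeds the second; multiplying through by the positive number $d_0(d_1-d_2)(d_1+(K-1)d_2)$ yields exactly $(d_1-d_2)(d_1+(K-1)d_2)-d_0(d_1+(K-2)d_2)>0$. The only step requiring care is this clearing of denominators, which is why I would first note that $D$, hence $D_1$, is positive definite so that $(d_1-d_2)(d_1+(K-1)d_2)>0$; the sign of $d_1+(K-2)d_2$ never enters. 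I would also remark that invariance of $\xi^*$ is not actually used — only that it has a support point in the open hypercube $(-1,1)^K$ — though I would keep the statement as phrased.
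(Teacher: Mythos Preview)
Your proof is correct and follows the same route as the paper's: invoke Theorem~\ref{t:non-vertex} to obtain $M(\xi^*)=\frac{1}{K+1}D^{-1}$, compare the first two diagonal entries, and clear denominators. The paper states the inequality $m_0(\xi^*)>m_1(\xi^*)$ as ``easy to see'' without justification; you supply the missing argument via $x_1^2<1$ at the interior support point, and you also make explicit why $(d_1-d_2)(d_1+(K-1)d_2)>0$ so that the final clearing of denominators preserves the inequality. Your observation that invariance of $\xi^*$ is not actually used is correct as well.
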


\begin{proof}
	According to Theorem \ref{t:non-vertex}, an invariant design $ \xi^* $ with a design point in the interior of the hypercube is $D$-optimal if and only if $ \Gamma(\xi^*)=0 $.
	Now, this implies that $ M(\xi^*)=\frac{1}{K+1}D^{-1} $ and therefore
	\begin{align*}
		m_0(\xi^*)&=\frac{1}{(K+1)d_0}, &&m_1(\xi^*)=\frac{d_1+(K-2)d_2}{(K+1)(d_1-d_2)(d_1+(K-1)d_2)},
	\end{align*}
	where $ m_1(\xi^*) $ denotes the diagonal entries of $ M_1(\xi^*) $.
	 It is easy to see that $ m_0(\xi^*)>m_1(\xi^*) $ for all designs with interior design points, so we obtain 
	 \begin{align*}
	 	&m_0(\xi^*)>m_1(\xi^*)\\
	 	\Leftrightarrow~~~ &(d_1-d_2)(d_1+(K-1)d_2)-d_0(d_1+(K-2)d_2)>0.
	 \end{align*}
\end{proof}

\begin{remark} \label{r:non-vertex}
Theorem \ref{t:non-vertex} describes the semi-algebraic structure of the optimality area of non-vertex designs. We see that this structure is given by the non-negative real part of an algebraic variety, so the vanishing set of a collection of polynomials under the constraints of the model cone $ \mathcal{C}_{K} $ and the design simplex.
This means that we can obtain symbolic solutions for the optimal designs weights and design points in dependence of the coefficients $ d_0,d_1 $ and $ d_2 $ by studying the set $ \left\{\Gamma(\xi)=0\right\} $ under the imposed constraints.
\end{remark}

\section{Rhombic Designs for \texorpdfstring{$  K \in \{2,3,4,5 \} $}{K in (2,3,4,5)}}\label{s:Examples}

This section investigates for which values $ d_0, d_1 $ and $ d_2 $ we find a vertex or non-vertex rhombic design for $ 2\le K\le 5 $.
\subsection{The case \texorpdfstring{$ K=2 $}{K=2}}
The results of this section where first calculated by hand and later confirmed with a \Mathematica implementation of Theorems \ref{t:vertex} and \ref{t:non-vertex}. For $ K=2 $, it is
\begin{align}\label{eq:Dforp=3}
D=\begin{pmatrix}
d_0 &0 & 0\\
0 & d_1 &  d_2 \\
0 &  d_2  & d_1\\
\end{pmatrix}
\end{align}
where $ |d_2|<d_1 $ and the variance of each design point is equal to 
\[
\sigma^2(x)=f(x)^{T}\, D\, f(x)=d_0 +d_1(x_0^2+x_1^2)+2d_2 x_0 x_1.
\]
The symmetric properties of the covariance structure with respect to the random effects of the two attributes, $\text{Var}(b_1)=\text{Var}(b_2)=d_1$, motivates us to consider as candidates for the $D$-optimal designs the following rhombic designs $\xi_{x_0\Diamond x_1,w}$ consisting  of the four design points $(x_0,x_0)$, $(-x_0,-x_0)$, $(-x_1,x_1)$ and $(x_1,-x_1)$ for $x_0,x_1 \in [-1,1]$ which form a centered rhombus within the design region. Therefore we have $ \mathcal{O}_0(x_0)=\{(x_0,x_0),(-x_0,-x_0)\} $ and $ \mathcal{O}_1(x_1)=\{(-x_1,x_1),(x_1,-x_1)\} $, so that $\sigma^2(x_0,x_0)=\sigma^2(-x_0,-x_0)$ and $\sigma^2(-x_1,x_1)=\sigma^2(x_1,-x_1)$. It follows that we can deal with two distinct weights $w_0=\xi(\mathcal{O}_0(x_0))$ and $w_1=\xi(\mathcal{O}_1(x_1))$ Since the sum of the weights of all orbits is equal to $1$ we can set $w_0=w$ and $w_1=1-w$.
The information matrix for $\xi_{x_0\Diamond x_1,w}$ results in 
\begin{align*}
M(\xi_{x_0\Diamond x_1,w})=
&\begin{pmatrix}\frac{w}{\sigma^2(x_0,x_0)}+\frac{1-w}{\sigma^2(-x_1,x_1)} && 0 && 0 \\ 
0 && \frac{w x_0^2}{\sigma^2(x_0,x_0)}+\frac{(1-w)x_1^2}{\sigma^2(-x_1,x_1)}  &&\frac{wx_0^2}{\sigma^2(x_0,x_0)}-\frac{(1-w)x_1^2}{\sigma^2(-x_1,x_1)}  \\
0 && \frac{wx_0^2}{\sigma^2(x_0,x_0)}-\frac{(1-w)x_1^2}{\sigma^2(-x_1,x_1)} && \frac{w x_0^2}{\sigma^2(x_0,x_0)}+\frac{(1-w)x_1^2}{\sigma^2(-x_1,x_1)} \\
\end{pmatrix}
\end{align*}
with determinant
\[
\det(M(\xi_{x_0\Diamond x_1,w}))=4\frac{(w\, \sigma^2(-x_1,x_1)+(1-w)\, \sigma^2(x_0,x_0))\, w\, (1-w)\, x_0^2\, x_1^2}{\left(\sigma^2(x_0,x_0)\, \sigma^2(-x_1,x_1)\right)^2}.
\]
Maximizing the determinant with respect to the variables $x_0$, $x_1$ and $w$ leads to the following results. Note that $ d_2=0 $ is excluded as this was already settled in \cite{GDHS2012}.

\begin{thm} \label{t:p=3}
In the heteroscedastic model of two-factorial multiple regression on $[-1,1]^2$ with dispersion matrix \emph{(\ref{eq:Dforp=3})} it follows:
\begin{itemize}
\item[(i)] If  $d_0\le d_1-|d_2|, $ the design $\xi_{x_0^*\Diamond x_1^*;0.5}$ is $D$-optimal with 
\[
\begin{tabular} {cc}
$x_0^*=\sqrt{\frac{d_0}{d_1+d_2}}$ & $x_1^*=\sqrt{\frac{d_0}{d_1-d_2}}$ 
\end{tabular}
\]
\item[(ii)] If  $ d_1-|d_2| \le d_0\le \frac{d_1^2-d_2^2}{d_1}$ the design $\xi_{x_0^*\Diamond x_1^*,w^*}$ is $D$-optimal with 
\begin{align*}
&w^*=\frac{2}{3}-\frac{d_0}{6(d_1-d_2)},\quad  x_0^*=\sqrt{\frac{d_1-d_2}{d_1+d_2}\cdot \frac{d_0}{2(d_1-d_2)-d_0}}, \quad x_1^*=1, \quad  \text{if} \quad d_2>0,\\
&w^*=\frac{1}{3}+\frac{d_0}{6(d_1+d_2)},\quad  x_0^*=1,\quad  x_1^*=\sqrt{\frac{d_1+d_2}{d_1-d_2} \cdot \frac{d_0}{2(d_1+d_2)-d_0}},\quad  \text{if} \quad d_2<0
\end{align*}

\item[(iii)] If  $\frac{d_1^2-d_2^2}{d_1}\le d_0 $ the vertex design $\xi_{1\Diamond 1,w^*}$ is $D$-optimal where $w^*$ solves the equation 
\[
2 \left(d_2 \left(6 w ^2-6 w +1\right)+d_1 (1-2 w )\right)+d_0 (1-2 w )=0.
 \]
\end{itemize}
\end{thm}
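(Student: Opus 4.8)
The plan is to establish $D$-optimality in each of the three regimes by verifying the relevant characterisation from Section~\ref{s:RhombicEquivalenceTheorem}; since those characterisations are necessary and sufficient for $D$-optimality over \emph{all} designs on $[-1,1]^2$, no separate argument that an optimal design must be rhombic is needed (maximising the determinant, as indicated above, only serves to produce the candidates). Throughout, abbreviate $s_0=\sigma^2(x_0,x_0)=d_0+2(d_1+d_2)x_0^2$ and $s_1=\sigma^2(-x_1,x_1)=d_0+2(d_1-d_2)x_1^2$, so that by Lemma~\ref{l:InfMatrix} the candidate $\xi_{x_0\Diamond x_1,w}$ has $m_0=\frac{w}{s_0}+\frac{1-w}{s_1}$, $m_1=\frac{wx_0^2}{s_0}+\frac{(1-w)x_1^2}{s_1}$ and $m_2=\frac{wx_0^2}{s_0}-\frac{(1-w)x_1^2}{s_1}$; by the inversion formula for completely symmetric matrices recalled earlier, $\frac1{K+1}D^{-1}$ has $(1,1)$-entry $\frac1{3d_0}$, lower-block diagonal entries $\frac{d_1}{3(d_1^2-d_2^2)}$ and lower-block off-diagonal entry $\frac{-d_2}{3(d_1^2-d_2^2)}$.

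For parts~(i) and~(ii), the proposed design always has a support point all of whose coordinates are of modulus strictly less than $1$, hence an interior point of $[-1,1]^2$: in~(i) the orbit $\mathcal O_0(x_0^*)$ is interior when $d_0<d_1+d_2$ and $\mathcal O_1(x_1^*)$ is interior when $d_0<d_1-d_2$, and on $d_0\le d_1-|d_2|$ at least one of these holds; in~(ii) one of $x_0^*,x_1^*$ equals $1$ while the other is $<1$ precisely for $d_0<\frac{d_1^2-d_2^2}{d_1}$. Thus Theorem~\ref{t:non-vertex} applies, and $D$-optimality of $\xi^*$ is equivalent to the three scalar identities $m_0=\frac1{3d_0}$, $m_1=\frac{d_1}{3(d_1^2-d_2^2)}$, $m_2=\frac{-d_2}{3(d_1^2-d_2^2)}$. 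Substituting the stated parameters into the formulas above and simplifying verifies all three: in~(i) the variances collapse to $s_0=s_1=3d_0$, so the identities are immediate; in~(ii) the particular $w^*$ and $x_0^*$ (respectively $x_1^*$ when $d_2<0$) are exactly the values that make the two summands in $m_0,m_1,m_2$ combine to those numbers. One checks along the way that $x_0^*,x_1^*\in(0,1]$ and $w^*\in[0,1]$ hold precisely on the indicated $d_0$-intervals, with $M(\xi^*)$ nonsingular there, and that the descriptions of~(i) and~(ii) agree at $d_0=d_1-|d_2|$ (both giving $w^*=\tfrac12$) and those of~(ii) and~(iii) agree at $d_0=\frac{d_1^2-d_2^2}{d_1}$.

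For part~(iii), the candidate $\xi_{1\Diamond1,w^*}$ is, for $K=2$, a rhombic vertex design supported on all four vertices, hence falls under the "$K$ even" case of Theorem~\ref{t:vertex}; moreover, all coordinates of the support points being $\pm1$, one has $m_1=m_0$. By Theorem~\ref{t:vertex} it then suffices to check $\gamma_2=0$, $\gamma_1=-\tfrac{\gamma_0}{2}$ and $\gamma_0\ge0$. With $\sigma^2(1,1)=d_0+2d_1+2d_2$, $\sigma^2(-1,1)=d_0+2d_1-2d_2$ and, via $m_1=m_0$, $\gamma_0=3d_0-\tfrac1{m_0}$, $\gamma_1=3d_1-\tfrac{m_0}{m_0^2-m_2^2}$, $\gamma_2=3d_2+\tfrac{m_2}{m_0^2-m_2^2}$, a direct computation shows that the equation $\gamma_2=0$ is exactly the quadratic $2(d_2(6w^2-6w+1)+d_1(1-2w))+d_0(1-2w)=0$ of the statement. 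The delicate point is that $\gamma_1=-\tfrac{\gamma_0}{2}$ then holds \emph{automatically}: using the weight normalisation, which after $\tfrac{w}{\sigma^2(1,1)}=\tfrac{m_0+m_2}{2}$ and $\tfrac{1-w}{\sigma^2(-1,1)}=\tfrac{m_0-m_2}{2}$ reads $(d_0+2d_1)m_0+2d_2m_2=1$, one eliminates $m_2$ from $\gamma_2=0$ and finds that \emph{both} $\gamma_2=0$ and $\gamma_1+\tfrac{\gamma_0}{2}=0$ reduce to the single quadratic $3\,\sigma^2(1,1)\sigma^2(-1,1)\,m_0^2-2(\sigma^2(1,1)+\sigma^2(-1,1))m_0+1=0$ in $m_0$ — precisely the quadratic whose roots $m_0^\pm$ were obtained in Corollary~\ref{c:vertex}. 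By the computation in the proof of Corollary~\ref{c:vertex}, the root $m_0^-$ always gives $\gamma_0<0$ on $\mathcal C_2$, whereas $m_0^+$ gives $\gamma_0\ge0$ if and only if $(d_1-d_2)(d_1+d_2)-d_0d_1\le0$, i.e.\ $d_0\ge\frac{d_1^2-d_2^2}{d_1}$; on this range the corresponding root $w^*$ of the quadratic in $w$ lies in $[0,1]$. Then $\psi(x;\xi^*)=\gamma_0(1-\tfrac{\|x\|^2}{2})\ge0$ on $[-1,1]^2$, and $\xi^*$ is $D$-optimal by Theorem~\ref{t:vertex} (equivalently by Theorem~\ref{t:equivalence}).

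The main obstacle is precisely this automatic vanishing in part~(iii): a rhombic vertex design has only one free weight, yet Theorem~\ref{t:vertex} demands the two equations $\gamma_2=0$ and $\gamma_1=-\tfrac{\gamma_0}{2}$, so one must exhibit the algebraic identity that merges them. That identity rests on the vertex-specific relation $m_1=m_0$ together with $\sigma^2(1,1)+\sigma^2(-1,1)=2(d_0+2d_1)$ and $\sigma^2(1,1)-\sigma^2(-1,1)=4d_2$, which is exactly what forces the two quadratics in $m_0$ to coincide; this is the mechanism already latent in Corollary~\ref{c:vertex}. The remaining work — the information-matrix computations of~(i)--(ii), and the bookkeeping that each quadratic's admissible root is the intended one, that the parameter ranges, the bounds $w^*\in[0,1]$ and the interior-point conditions match the stated case split, and that the three descriptions agree at $d_0=d_1-|d_2|$ and $d_0=\frac{d_1^2-d_2^2}{d_1}$ — is routine; it is also where the separate treatment of $d_2>0$ and $d_2<0$ in~(ii) enters, the two sign cases being exchanged by swapping the two orbits (equivalently $w\mapsto1-w$).
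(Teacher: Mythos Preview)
Your proposal is correct and follows exactly the route the paper itself takes: the paper's entire proof reads ``Check with Theorems~\ref{t:vertex} and~\ref{t:non-vertex}, that the designs are optimal,'' and what you have written is precisely that verification carried out in detail. Your explicit handling of part~(iii) --- showing that $\gamma_2=0$ and $\gamma_1=-\gamma_0/2$ collapse to the same quadratic in $m_0$ via the vertex relation $m_1=m_0$ and the weight normalisation, and then invoking the root analysis of Corollary~\ref{c:vertex} --- is the substantive content the paper leaves implicit, and it is done correctly.
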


\begin{proof}
Check with Theorems \ref{t:vertex} and \ref{t:non-vertex}, that the designs are optimal.
\end{proof}
(i) and (ii) describe non-vertex rhombic designs, while (iii) describes the rhombic design with support on the vertices of the square. The Theorem shows that there is a $D$-optimal rhombic design for all $ (d_0,d_1,d_2)^T \in \mathcal{C}_2 $.
Figures \ref{f:p=3} and \ref{f:p=3puzzle} visualize the different optimality regions in Theorem \ref{t:p=3} for $ d_0=1 $. Note that the region only depends on the quotients $ \frac{d_1}{d_0} $ and $ \frac{d_2}{d_0} $, so the choice of $ d_0 $ is arbitrary.

\begin{figure}
\centering
 \subfloat[ Regions (i), (ii)]{\includegraphics[scale=0.5]{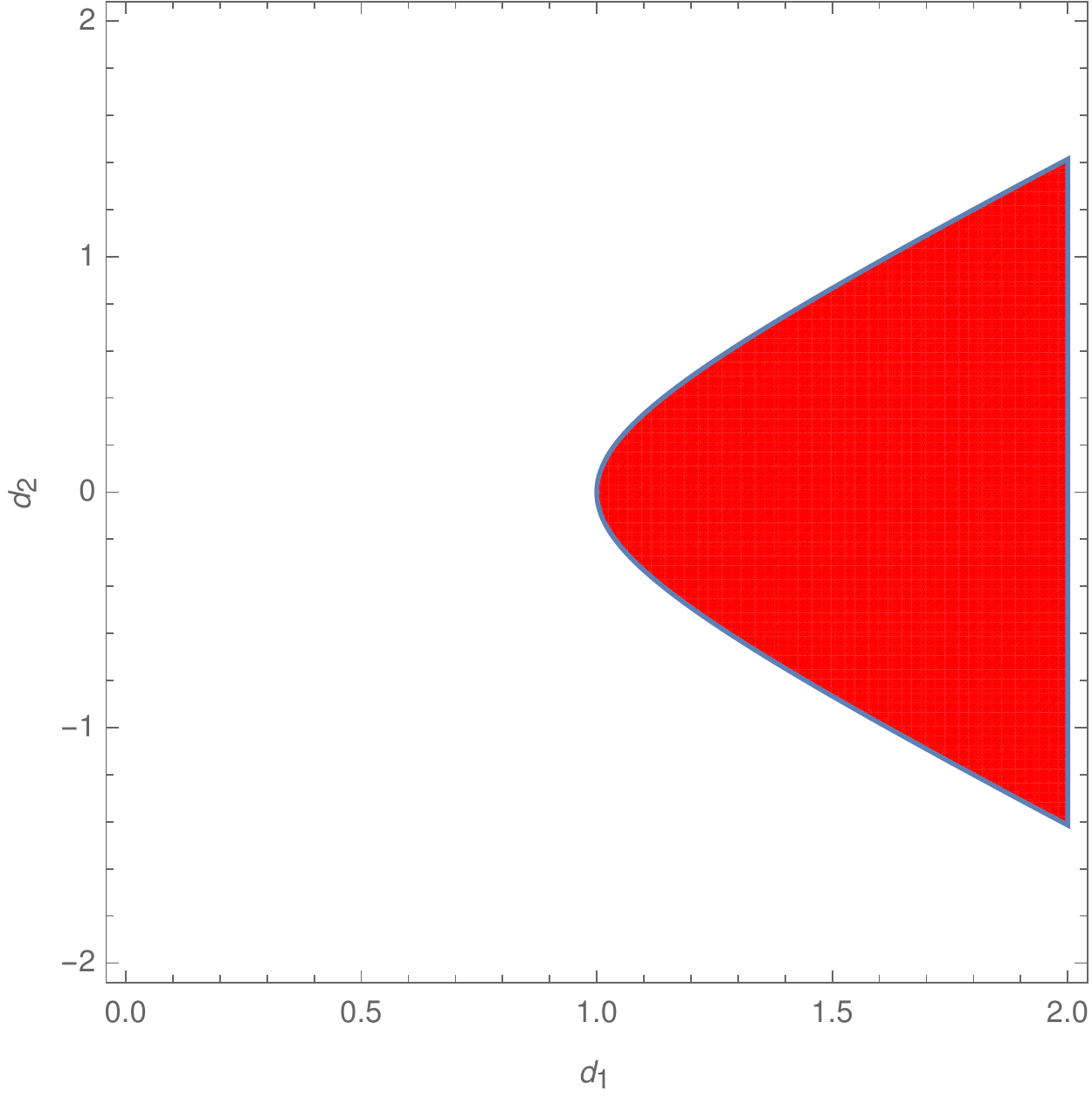}}
~~~~~\subfloat[Region (iii)]{\includegraphics[scale=0.5]{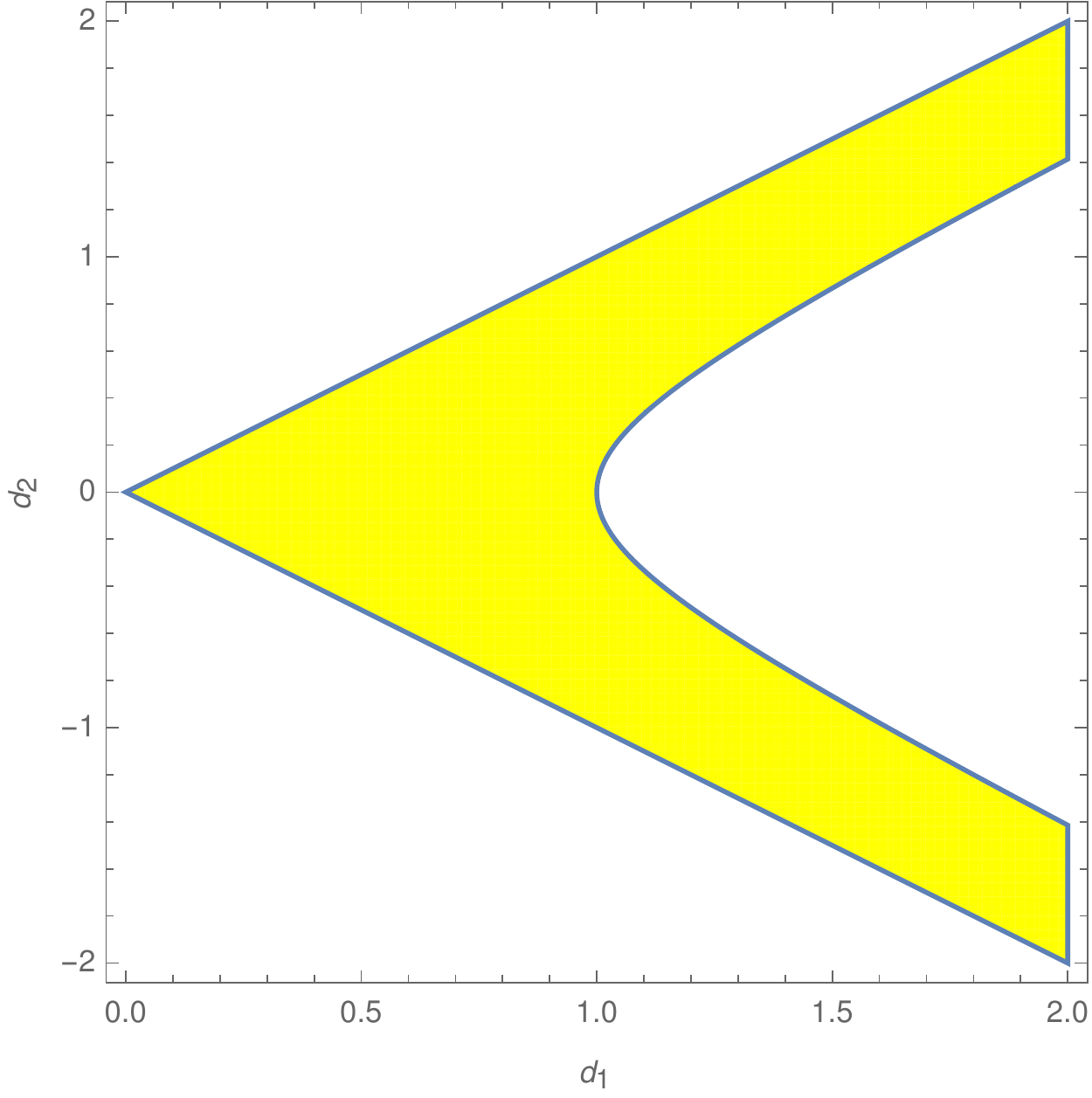}}
\caption{Parameter regions for $ K=2 $: Figure (A) shows parameter regions of rhombic non-vertex designs, while Figure (B) shows parameter regions of rhombic vertex designs }\label{f:p=3}
\end{figure}

\begin{figure} 
\centering
\includegraphics[scale=0.6]{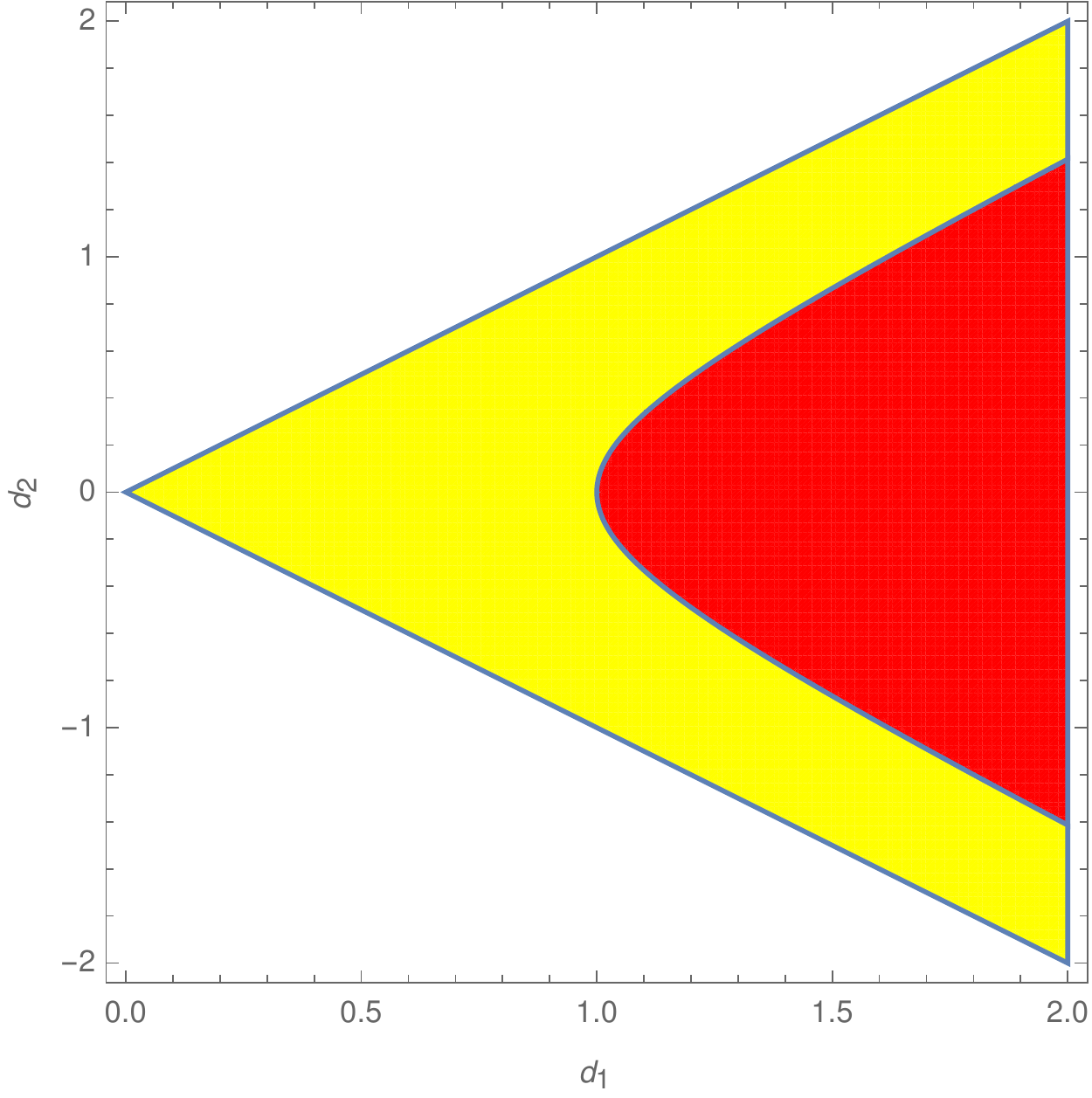}
\caption{Assembling the parameter regions of rhombic designs for $ K=2 $} \label{f:p=3puzzle}
\end{figure}

\subsection{The case \texorpdfstring{$ K=3 $}{K=3}}
The following Theorem results from a \Mathematica implementation of Theorems \ref{t:vertex} and \ref{t:non-vertex}.
\begin{thm}\label{t:p=4}
For the setting from Section \ref{s:Multiplelinregression} with $ K=3 $, so
\[D=\left(\begin{array}{cccc}
d_0   & 0     & 0 &0\\
0     &d_1   &d_{2} &d_2 \\
0&d_{2}&d_1    &d_2 \\
0&d_{2}&d_2&d_1    \\
\end{array}\right),\]
where $ -\frac{d_1}{2}<d_2<d_1 $ it follows:
\begin{itemize}
\item[(i)] If either $ d_0<d_1+d_2 \land 0<d_2<\frac{d_1}{2} $ or $ d_0<\frac{(d_1+2 d_2)^2}{d_1-2 d_2}\land d_2< 0 $, the design with $ w^*=\frac{1}{4} $ and
\begin{align*}
x_0^*&=\frac{\sqrt{d_0 (d_1-2 d_2)}}{d_1+2 d_2},   &&x_1^*=\sqrt{\frac{d_0}{d_1-2d_2}},\\
\end{align*}
is $ D $-optimal.
\item[(ii)] If $ d_2<\frac{d_1}{2}\land d_0<\frac{(d_1-d_2)(d_1+2d_2)}{d_1+d_2} $, it holds that
the design with
\begin{align*}
w^*&=\frac{3 d_0-7 d_1+10 d_2}{-16(d_1-d_2)}, &&x_0^*=\sqrt{\frac{d_0 (-d_1+2 d_2)}{(d_1+2 d_2) (3 d_0-4 d_1+4 d_2)}}, &&& x_1^*=1,
\end{align*}
is $ D $-optimal.
\item[(iii)] If $ d_2<\frac{d_1}{2}\land d_0<\frac{(d_1-d_2)(d_1+2d_2)}{d_1+d_2} $, it holds that
the design with
\begin{align*}
 x_0^*&=1, \qquad  x_1^*=\sqrt{\frac{3d_0 (d_1+2d_2)}{2 d_0 d_2-d_0 d_1-8 d_2^2+4 d_2 d_1+4 d_1^2}},\\
w^*&=\frac{(d_1-2 d_2) (d_0+3 d_1+6 d_2)}{16 (d_1-d_2)(d_1+2d_2)},
\end{align*}
is $ D $-optimal.
\item[(iv)] If $ d_2 \neq 0 $ and $
\left(d_0(d_1+d_2)\geq (d_1-d_2)(d_1+2d_2)\land \ d_2\le \frac{d_1}{2}\right)
\lor (\frac{d_1}{2}<d_2\land 3 d_0+9 d_1>22 d_2), 
$
then the design with $ x_0^*=x_1^*=1 $ and
\begin{multline*}
w^*=\frac{3 d_0^2+22 d_0 d_2+18 d_0 d_1-120 d_2^2+66 d_2 d_1+27 d_1^2}{64 d_2 (d_0-3 d_2+3 d_1)}\\
 -\frac{3 \sqrt{(d_0-2 d_2+3 d_1)^2 \left(d_0^2+8 d_0 d_2+6 d_0 d_1+48 d_2^2+24 d_2 d_1+9 d_1^2\right)}}{64 d_2 (d_0-3 d_2+3 d_1)}
\end{multline*}
is $ D $-optimal.
\end{itemize}
\end{thm}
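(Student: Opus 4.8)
The plan is to apply Theorems~\ref{t:vertex} and~\ref{t:non-vertex} directly, since every case (i)--(iv) of Theorem~\ref{t:p=4} is either a non-vertex rhombic design (cases (i)--(iii), which have a support point with $x_0^*<1$ or $x_1^*<1$) or a rhombic vertex design (case (iv), where $x_0^*=x_1^*=1$). In each case the strategy is the same: write down the information matrix $M(\xi^*)$ for the proposed design using Lemma~\ref{l:InfMatrix}, form $\Gamma(\xi^*)=4D-M(\xi^*)^{-1}$ (here $K=3$, so $p=4$), and verify the appropriate optimality condition. For the non-vertex cases (i)--(iii) one checks that $\Gamma(\xi^*)=0$, equivalently $M(\xi^*)=\tfrac14 D^{-1}$; by Theorem~\ref{t:non-vertex} this is both necessary and sufficient for $D$-optimality once the design genuinely has an interior point, so it also remains to confirm that under the stated parameter inequalities the location $x_0^*$ (resp.\ $x_1^*$) indeed lies strictly inside $(0,1)$ and that the weights $w^*$ lie in $[0,1]$. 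For the vertex case (iv) one checks via Theorem~\ref{t:vertex} that $\Gamma(\xi^*)$ is diagonal with $\gamma_0\ge 0$ and $\gamma_1=-\gamma_0/3$, i.e.\ that $\gamma_2=0$ and the $\gamma_1=-\gamma_0/K$ relation holds; since $K=3$ is odd and the design is supported on more than just $\mathcal{O}_{\tilde K}$, Theorem~\ref{t:vertex} applies.

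Concretely, for each case I would first compute $\sigma^2(x_0^*,x_0^*,x_0^*)=d_0+3d_1(x_0^*)^2+6d_2(x_0^*)^2$ and $\sigma^2$ on the orbit $\mathcal{O}_1(x_1^*)$, which for a point with two entries of one sign and one of the other equals $d_0+3d_1(x_1^*)^2+(6d_2-8d_2)(x_1^*)^2=d_0+3d_1(x_1^*)^2-2d_2(x_1^*)^2$. Then $m_0(\xi^*)=\tfrac{w^*}{\sigma^2(\mathcal{O}_0)}+\tfrac{1-w^*}{\sigma^2(\mathcal{O}_1)}$, and $M_1(\xi^*)$ is completely symmetric with entries determined by $w^*(x_0^*)^2/\sigma^2(\mathcal{O}_0)$ and $(1-w^*)(x_1^*)^2/\sigma^2(\mathcal{O}_1)$ together with the combinatorial sign pattern of $\mathcal{O}_1$; inverting the $2\times 2$-type completely symmetric block via the inversion formula recalled before Section~\ref{s:RhombicEquivalenceTheorem} gives $M_1(\xi^*)^{-1}$ in closed form. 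Setting $4D-M(\xi^*)^{-1}$ equal to the target (zero, or diagonal with the $\gamma_1=-\gamma_0/3$ relation) yields polynomial identities in $d_0,d_1,d_2$; substituting the claimed $w^*,x_0^*,x_1^*$ reduces these to algebraic identities that either cancel outright or cancel under the stated constraints. The natural tool here is a computer algebra check, exactly as the authors indicate: these are the output of a \Mathematica implementation, so the proof amounts to substituting the formulas and confirming the equivalence-theorem conditions hold symbolically on each region.

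The main obstacle is bookkeeping rather than conceptual: the closed forms for $w^*$, $x_0^*$, $x_1^*$ in cases (ii)--(iv) are genuinely messy (case (iv) even carries a square root), so verifying $\Gamma(\xi^*)=0$ or the diagonality-plus-trace condition by hand is error-prone, and one must be careful that all intermediate quantities---the two variance values $\sigma^2$, the determinant of the lower-right block, the denominators in $m_0^\pm$---are nonzero on the relevant region so that the inversions are legitimate. A secondary point requiring care is the boundary between regions: one should check that on the overlap of the stated inequalities the different candidate designs agree (or that the regions tile $\mathcal{C}_3$ up to measure zero), mirroring the ``assembling the parameter regions'' discussion for $K=2$. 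But once the substitutions are carried out, each verification is a finite symbolic computation, and the theorem follows from Theorems~\ref{t:vertex} and~\ref{t:non-vertex} with no further input.
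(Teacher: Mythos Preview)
Your proposal is correct and matches the paper's own proof essentially line for line: the paper's argument is simply ``For the cases (i), (ii), (iii) check that the equation $\tfrac{1}{4}D^{-1}=M(\xi^*)$ holds and the model constraints are satisfied. For the fourth case, check that $m_0(\xi^*)\ge \tfrac{1}{4d_0}$ and that the model constraints are satisfied.'' This is exactly your plan of invoking Theorem~\ref{t:non-vertex} for (i)--(iii) and Theorem~\ref{t:vertex} for (iv), with the symbolic verification delegated to \Mathematica; your write-up is just a more detailed account of the same computation.
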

\begin{proof}
For the cases (i), (ii), (iii) check that the equation $ \frac{1}{4}D^{-1}=M(\xi^*) $ holds and the model constraints are satisfied.
For the fourth case, check that $ m_0(\xi^*)\ge \frac{1}{4d_0} $ and that the model constraints are satisfied.
\end{proof}
Note that not all settings of $ (d_0,d_1,d_2) $ are covered by Theorem \ref{t:p=4} and that the described design areas are not disjoint. (ii) and (iii) describe the same optimality area that also contain area (i). Figures \ref{f:p=4} and \ref{f:p=4puzzle} show the optimality area for $ d_0=1 $ in the $ (d_1,d_2) $-space. Again, the region only depends on the quotients $ \frac{d_1}{d_0} $ and $ \frac{d_2}{d_0} $, so the choice of $ d_0 $ is arbitrary. The area where we did not find an optimal rhombic design is given by $ \frac{d_1}{2}<d_2\land 3 d_0+9 d_1\le 22 d_2 $.

\begin{figure}
\centering
\subfloat[Regions (i), (ii), (iii)]{\includegraphics[scale=0.5]{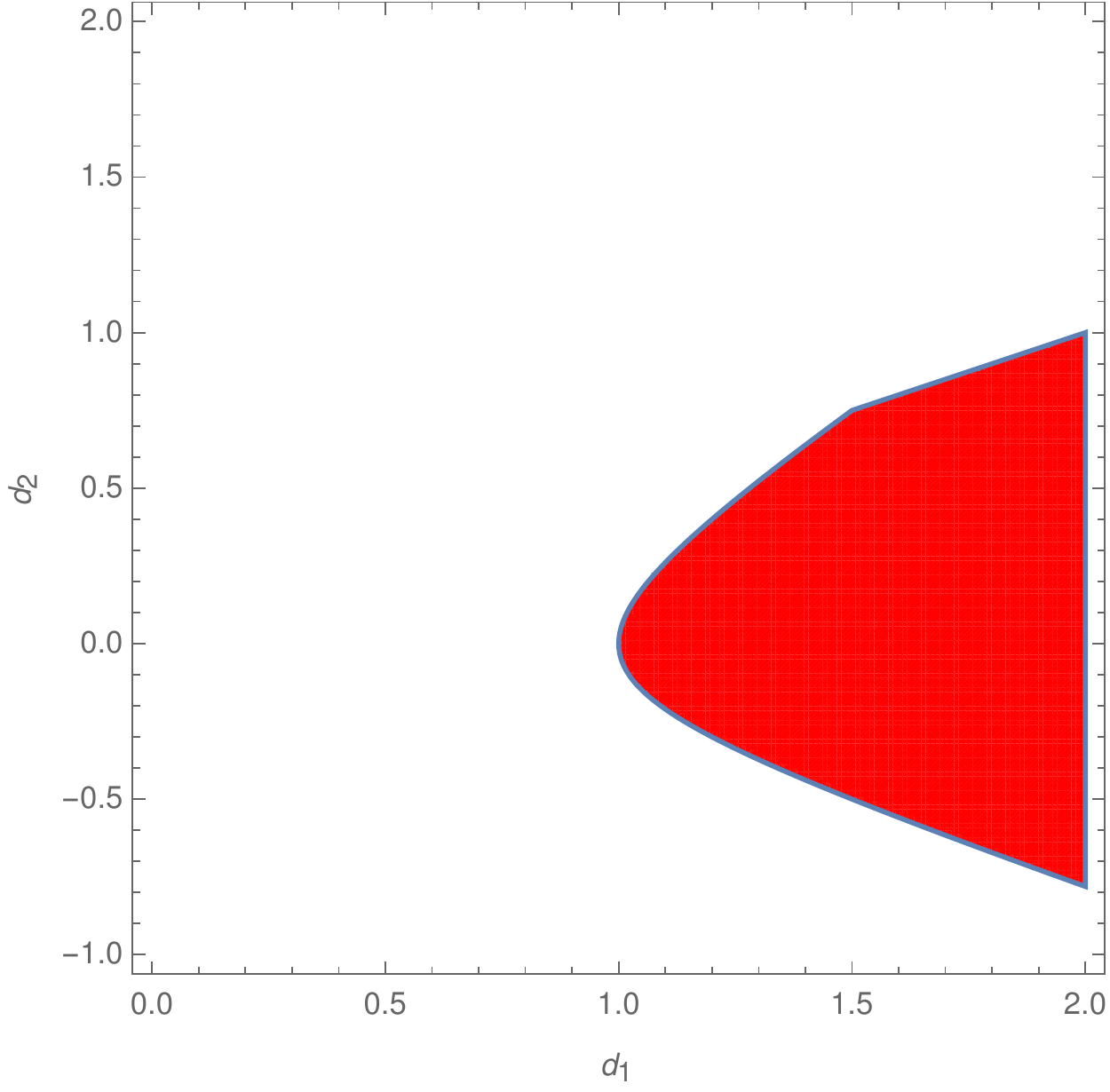}}
~~~~\subfloat[Region (iv)]{\includegraphics[scale=0.5]{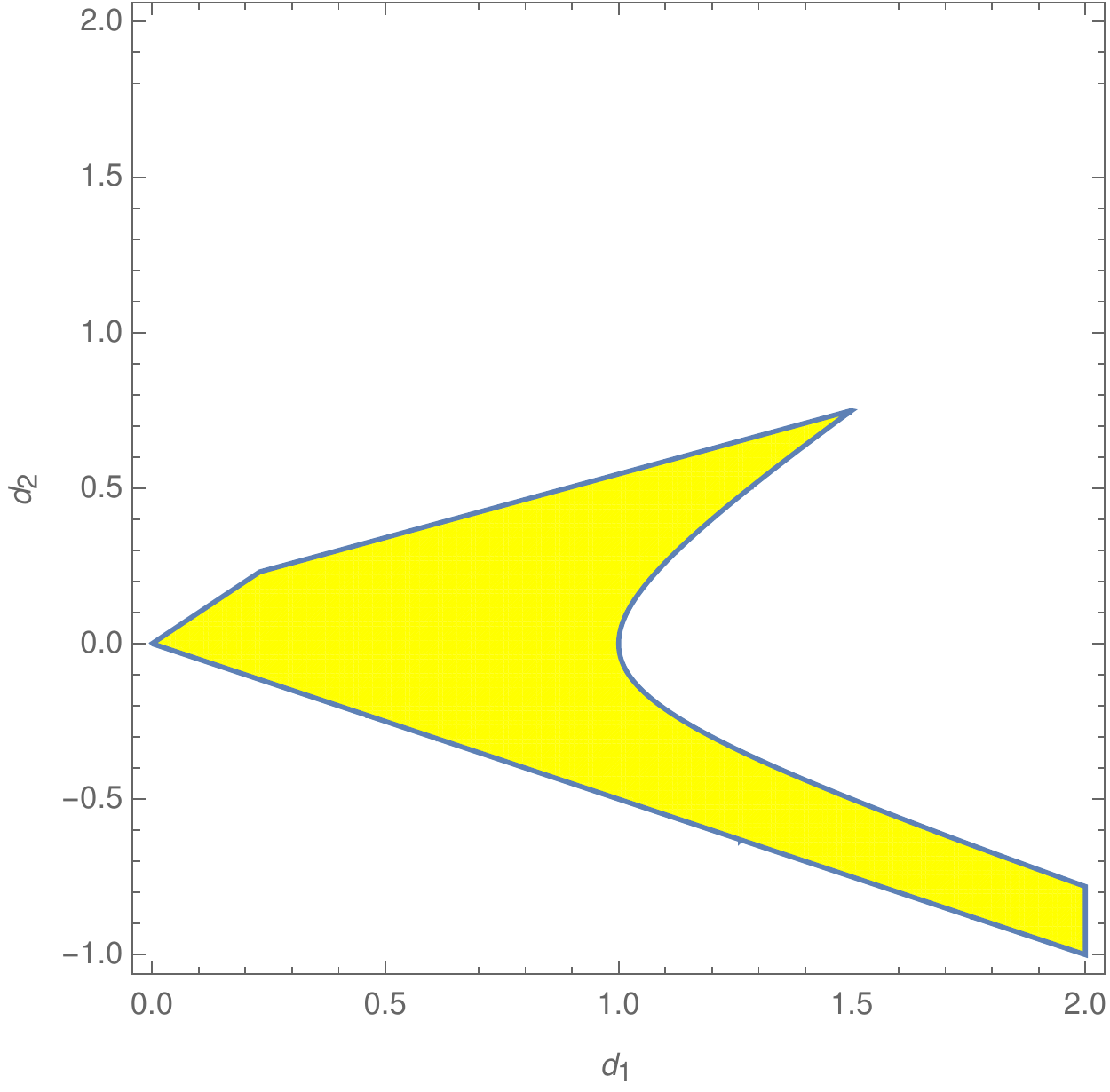}}
\caption{Parameter regions for $ K=3 $: Figure (A) shows parameter regions of rhombic non-vertex designs, while Figure (B) shows parameter regions of rhombic vertex designs}\label{f:p=4}
\end{figure}

\begin{figure}
\centering
\includegraphics[scale=0.6]{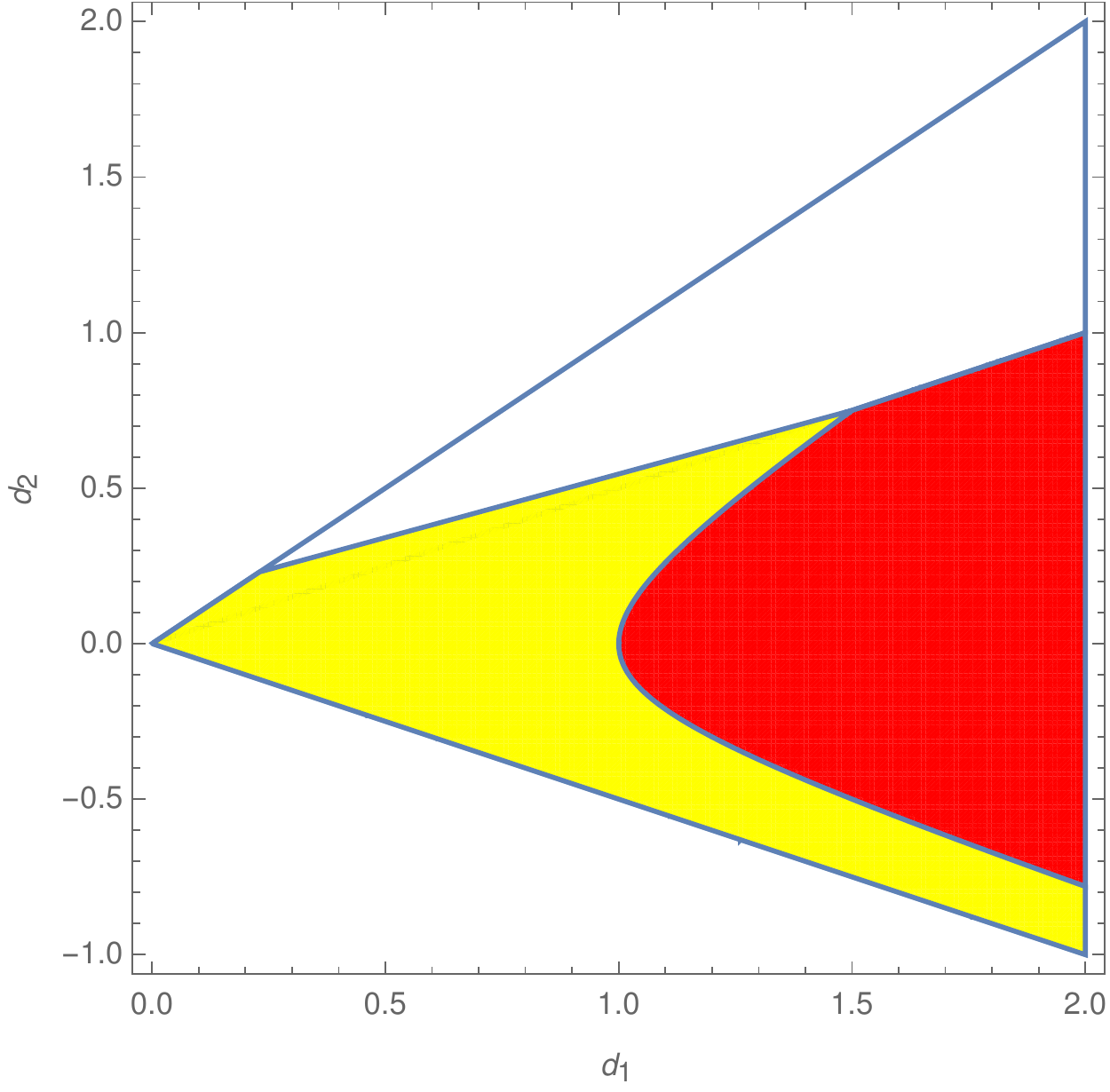}
\caption{Assembling the parameter regions for $ K=3 $}\label{f:p=4puzzle}
\end{figure}

\subsection{The cases \texorpdfstring{$ K=4 $}{K=4} and \texorpdfstring{$ K=5 $}{K=5}} 
For $ K=4 $ and $ K=5 $ there are up to three orbits for rhombic designs. To compute an optimal rhombic vertex design, we let $ \mathcal{O}_\ell(x_\ell) $ denote the orbits of rhombic design points and choose $ x_0=x_1=x_2=1 $, such that the weights are $ w_\ell=\xi(\mathcal{O}_\ell(1)) $ and check the conditions in Theorem \ref{t:vertex} for optimality. 
The different optimality areas are shown in Figure \ref{f:p=5puzzle}. Again, in the red region, an rhombic design with interior points is $ D $-optimal, while in the yellow area, a rhombic vertex design is $ D $-optimal. The separating line is again given by the equality of the first and the second diagonal entry of $ D^{-1} $, see Corollary \ref{c:vertex} and Corollary \ref{c:non-vertex}. We see a similar structure as for $ K=2 $ and $ K=3 $. For $ K=4 $, there is a $ D $-optimal rhombic design for every point in $ \mathcal{C}_4 $, while for $ K=5 $, in the region above $ d_2=\frac{d_1}{2} $ there is only a small area where rhombic designs are $ D $-optimal, similar to the case for $ K=3 $.

	\begin{figure}
		\centering
		\subfloat[Parameter regions for $ K=4 $]{\includegraphics[scale=.45]{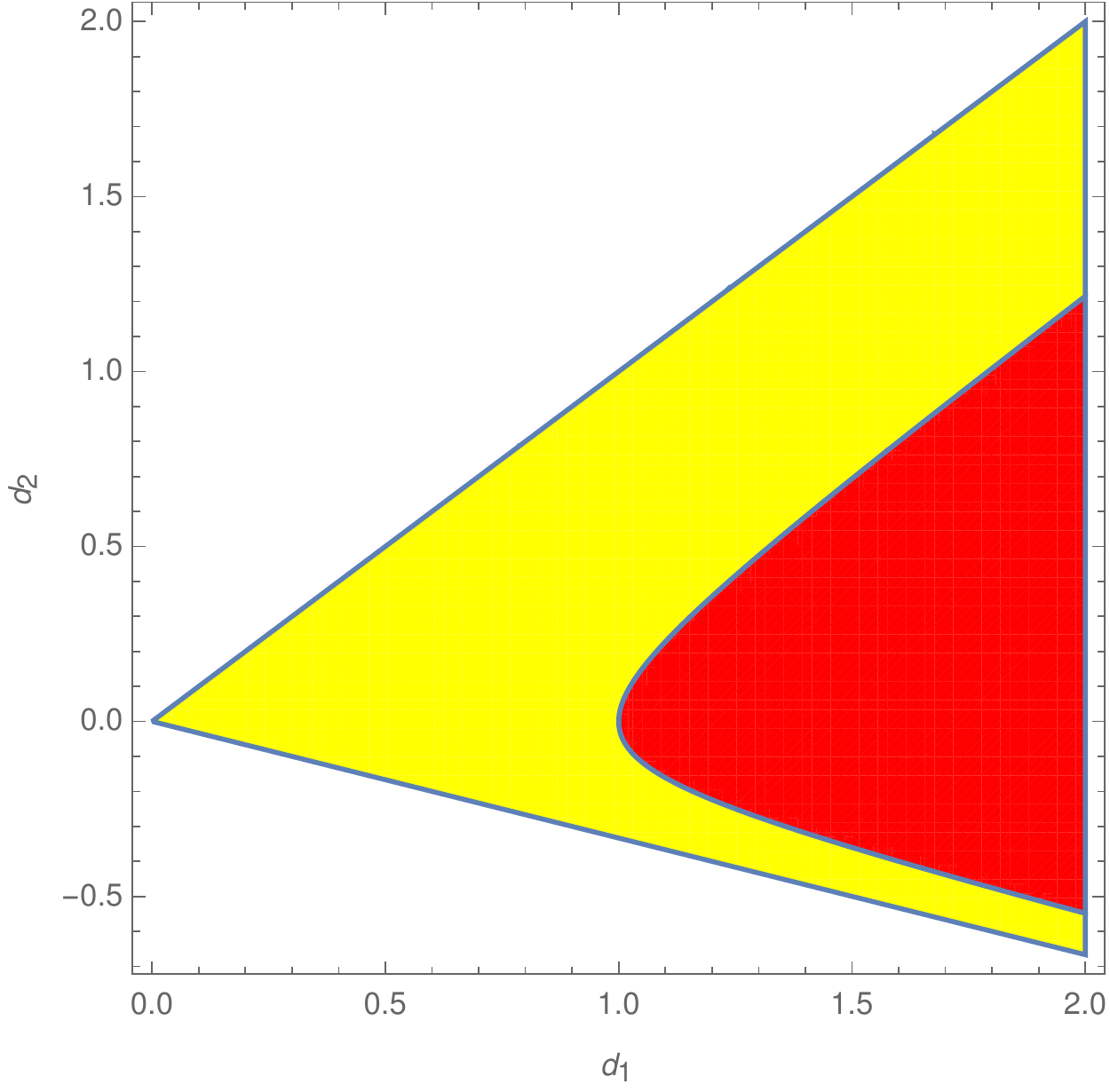}}
		~~
		\subfloat[Parameter regions for $ K=5 $]{	\includegraphics[scale=.45]{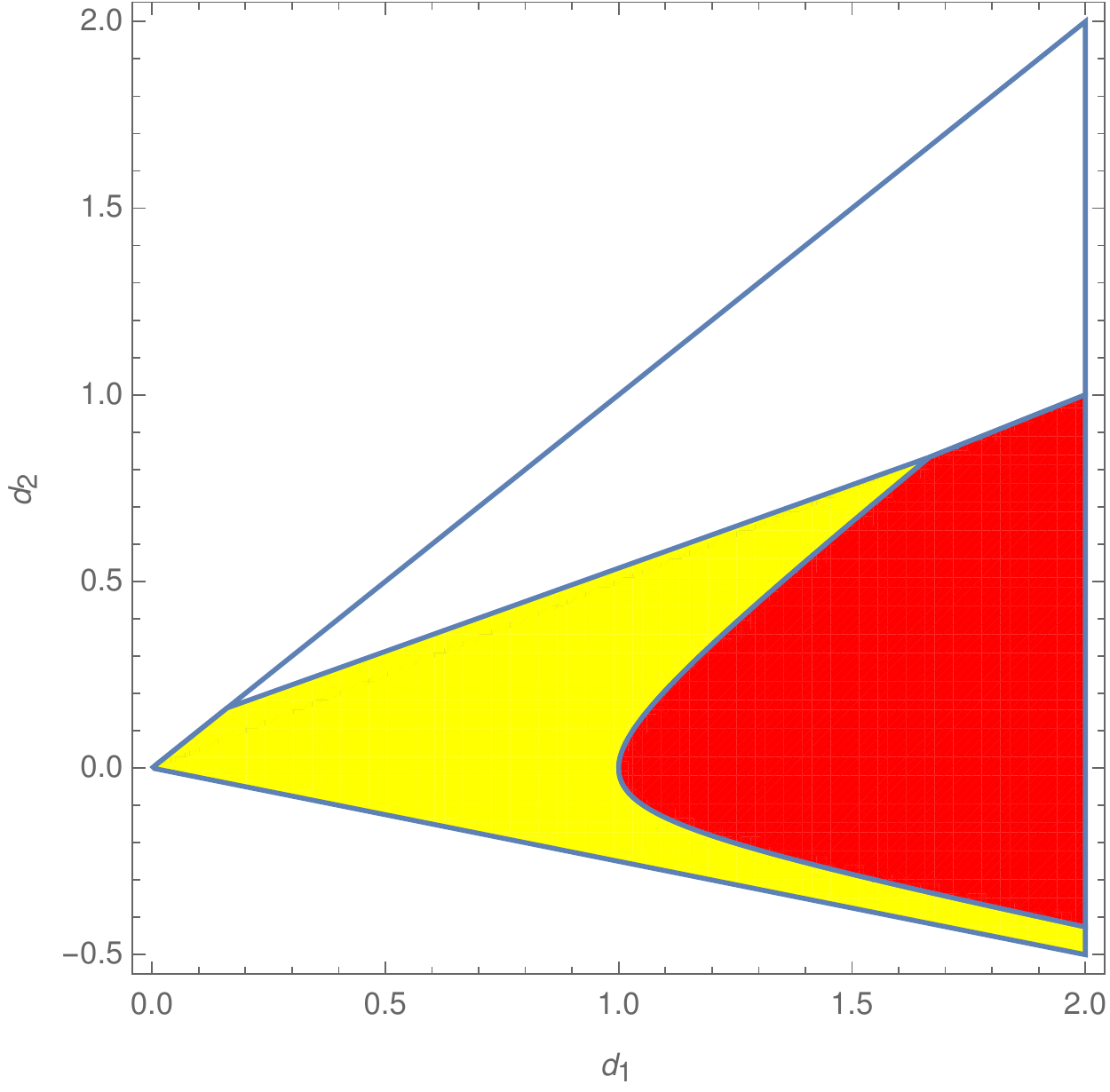}}
\caption{Assembling the optimality regions for $ K=4 $ and $ K=5 $} \label{f:p=5puzzle}
	\end{figure}

\begin{remark}
The optimality regions shown in the figures for $ K\in \{2,3,4,5\} $ are given in the $ (d_1,d_2) $-space while $ d_0=1 $. As before, the region only depends on the quotients $ \frac{d_1}{d_0} $ and $ \frac{d_2}{d_0} $, so the choice of $ d_0=1 $ is arbitrary. $ D $-optimal designs and the corresponding parameter regions where they are optimal can be found by studying the semi-algebraic sets as described in Remark \ref{r:non-vertex} and Remark \ref{r:vertex}. A convenient way to generate the images showing the optimality regions is therefore to use the \textbf{Resolve} and \textbf{RegionPlot} commands of \Mathematica to compute and plot these regions. This was done for $ K\in \{2,3,4,5\} $.
\end{remark}

\section{Discussion} \label{s:conjectures}

In the preceding sections optimality regions have been investigated for certain invariant designs in a multiple linear regression model on the hypercube with invariant correlation structure of the random coefficients. It has been shown that for the introduced class of rhombic designs, it is possible to decide whether a $D$-optimal design is either supported on the vertices of the hypercube or has interior design points by evaluating a quadratic polynomial depending on the covariance matrix of the random coefficients. This result relies on the Kiefer-Wolfowitz equivalence theorem.
The equation separating the two optimality regions is given as the equality of the diagonal entries of $ D^{-1} $.

The results of Theorem \ref{t:non-vertex} hold not only for rhombic designs but for all designs with an interior design point, independently of invariance considerations. This means that the $D$-optimality of designs with interior points is equivalent to the equation $ M(\xi^*)=\frac{1}{p}D^{-1} $.

An important observation is the apparent non-existence for $D$-optimal rhombic designs for certain values of the entries $ D $. For small dimensions, we have observed that for even $ K $, we could always find a $D$-optimal rhombic design for any $ D $, while this has not been true for odd $ K $. With respect to our findings, we conjecture the following:
\begin{conj}
For even $ K $, there is a $ D $-optimal rhombic design for all $ (d_0,d_1,d_2)^T \in \mathcal{C}_K $.	
For odd $ K $, there is a $ D $-optimal rhombic design for all $ (d_0,d_1,d_2)^T \in \mathcal{C}_K $ with $ d_2\le \frac{d_1}{2} $.
\end{conj}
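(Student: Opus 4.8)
Set $P:=(d_1-d_2)(d_1+(K-1)d_2)-d_0(d_1+(K-2)d_2)$. By Corollaries~\ref{c:vertex} and~\ref{c:non-vertex} the two regimes are separated by the sign of $P$: on $\{P>0\}$ a $D$-optimal rhombic design must satisfy $M(\xi^*)=\tfrac1{K+1}D^{-1}$, while on $\{P\le0\}$ it must be a vertex design. The plan is to exhibit, for every $(d_0,d_1,d_2)^T$ in the region claimed by the conjecture, a \emph{feasible} rhombic design of the appropriate type (nonnegative weights, location parameters in $(0,1]$) satisfying the equivalence conditions of Theorems~\ref{t:non-vertex} and~\ref{t:vertex}.

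For $\{P>0\}$, by Lemma~\ref{l:InfMatrix} the equation $M(\xi)=\tfrac1{K+1}D^{-1}$ reduces to matching the three scalars $m_0(\xi),m_1(\xi),m_2(\xi)$ to the entries of $\tfrac1{K+1}D^{-1}$ (the first two appear in the proof of Corollary~\ref{c:non-vertex}). Writing $v_\ell:=w_\ell x_\ell^2/\sigma^2(x_\ell)\ge0$, the orbit decomposition~\eqref{eq:information} gives $m_1(\xi)=\sum_\ell v_\ell$ and $m_2(\xi)=\sum_\ell c_\ell v_\ell$ with $c_\ell:=\bigl((K-2\ell)^2-K\bigr)/\bigl(K(K-1)\bigr)$, the common value of a single off-diagonal $M_1$-contribution of an orbit with $\ell$ sign changes (cf.~\eqref{eq:directionalderivative2}). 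Hence $m_2(\xi)/m_1(\xi)$ is a convex combination of $c_0=1,c_1,\dots,c_{\tilde K}$, so it must lie in $[c_{\tilde K},1]$; since the target ratio equals $-d_2/(d_1+(K-2)d_2)$, an elementary computation shows that this membership holds on all of $\mathcal C_K$ when $K$ is even ($c_{\tilde K}=-1/(K-1)$, the condition becoming $d_2\le d_1$) and on $\mathcal C_K\cap\{d_2\le d_1/2\}$ precisely when $K$ is odd ($c_{\tilde K}=-1/K$, the condition becoming $2d_2\le d_1$) --- this is exactly the dichotomy of the conjecture, and it is the only place where the parity of $K$ enters. Under these hypotheses one places the mass on a pair of orbits whose $c$-values straddle the target ratio, e.g.\ $\mathcal O_0$ and $\mathcal O_{\tilde K}$; the parameters $w_0,w_{\tilde K},x_0,x_{\tilde K}$ subject to $w_0+w_{\tilde K}=1$ and the three moment equations form a determined system, and one verifies that its solution satisfies $w_\ell\ge0$ and $x_\ell\in(0,1]$ (should this fail, a third orbit provides the needed slack). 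For $K\in\{2,3\}$ this reproduces the non-vertex designs of Theorems~\ref{t:p=3} and~\ref{t:p=4}.

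For $\{P\le0\}$ we first simplify Theorem~\ref{t:vertex}: a rhombic vertex design has $m_0(\xi)=m_1(\xi)$ (all $x_\ell=1$), so $\gamma_2(\xi)=0$ makes $\psi(x;\xi)=\gamma_0+\gamma_1\|x\|^2$ constant on the support, equal to $\gamma_0+K\gamma_1$; together with the identity $\int\psi(x;\xi)/\sigma^2(x)\,d\xi(x)=p-\operatorname{tr}(M(\xi)^{-1}M(\xi))=0$ (immediate from $\int f(x)f(x)^T/\sigma^2(x)\,d\xi=M(\xi)$ and $f(x)^TDf(x)=\sigma^2(x)$) this forces $\gamma_0+K\gamma_1=0$, i.e.\ $\gamma_1=-\gamma_0/K$. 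Thus the conditions of Theorem~\ref{t:vertex} collapse to the single polynomial equation $\gamma_2(\xi)=0$ in the weights $w_0,\dots,w_{\tilde K}$ together with the inequality $\gamma_0=pd_0-1/m_0(\xi)\ge0$, and one shows that $\{\gamma_2=0\}$ meets the feasible set $\{w_\ell\ge0,\ \sum_\ell w_\ell=1,\ m_0(\xi)\ge1/(pd_0)\}$ whenever $P\le0$: the bounds $c_{\tilde K}<0<1$ guarantee that the required sign of $m_2(\xi)$ (opposite to that of $d_2$) is attainable, and Corollary~\ref{c:vertex} identifies $P\le0$ as exactly the condition under which the half-space $\gamma_0\ge0$ is reached.

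The routine parts are the moment bookkeeping and the sign comparisons of $-d_2/(d_1+(K-2)d_2)$ with $c_{\tilde K}$. The genuine obstacle is \emph{uniform feasibility}: proving that the constructed designs keep all weights nonnegative and all location parameters in $(0,1]$ for every admissible $(d_0,d_1,d_2)$ and every $K$, rather than verifying this dimension by dimension as in Section~\ref{s:Examples}. The most promising route is a continuity/degree argument anchored at the hypersurface $\{P=0\}$: there the target forces $m_0(\xi)=m_1(\xi)$, so the non-vertex family degenerates to a vertex design with all $x_\ell=1$, and one would show that the non-vertex branch continues into $\{P>0\}$ with the $x_\ell$ strictly decreasing (forced by $m_1(\xi)<m_0(\xi)$) and the weight vector remaining in the simplex as long as the straddling condition on $c_{\tilde K}$ persists, while dually the vertex branch persists throughout $\{P\le0\}$. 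For odd $K$ above $d_2=d_1/2$ the straddling condition fails, $-d_2/(d_1+(K-2)d_2)$ leaves $[c_{\tilde K},1]$, and no rhombic design can realize $\tfrac1{K+1}D^{-1}$ --- consistent with the gaps observed for $K=3,5$.
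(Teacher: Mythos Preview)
This statement is a \emph{conjecture} in the paper; the authors do not prove it. Their evidence is computational: Section~\ref{s:Examples} verifies it for $K\in\{2,3,4,5\}$ via \textsc{Mathematica}, and Section~\ref{s:conjectures} records the pattern as a conjecture. So there is no paper proof to compare your attempt against.

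Your strategy is structurally sound and goes beyond what the paper offers. The reduction of $m_2/m_1$ to a convex combination of the orbit coefficients $c_\ell=\bigl((K-2\ell)^2-K\bigr)/\bigl(K(K-1)\bigr)$, together with the computation $c_{\tilde K}=-1/(K-1)$ for even $K$ versus $c_{\tilde K}=-1/K$ for odd $K$, gives a clean conceptual explanation of why the threshold $d_2\le d_1/2$ appears for odd $K$ only; the paper never isolates this mechanism. Your simplification on the vertex side---using $m_0(\xi)=m_1(\xi)$ for vertex designs and the trace identity $\int\psi/\sigma^2\,d\xi=0$ to show that $\gamma_2=0$ already forces $\gamma_1=-\gamma_0/K$---is also correct and not in the paper.

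That said, what you have is a programme, not a proof, and you say so yourself. The convex-hull condition on $m_2/m_1$ is necessary but not sufficient for realizability: you must still produce, for every admissible $(d_0,d_1,d_2)$ and every $K$, explicit weights in the simplex and locations in $(0,1]$ solving the moment system (non-vertex case) or the equation $\gamma_2=0$ with $\gamma_0\ge0$ (vertex case). Your appeal to Corollary~\ref{c:vertex} in the last paragraph overreads it slightly: that corollary gives $P\le0$ as a \emph{necessary} condition for a vertex optimum, not as a guarantee that a feasible vertex design with $\gamma_0\ge0$ exists. The continuity/degree argument you sketch at the hypersurface $\{P=0\}$ is plausible but not carried out, and controlling feasibility globally---not just near $\{P=0\}$---is exactly the obstacle that keeps this a conjecture.
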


\subsection*{Acknowledgements}
Work supported by grants HO\,1286/6, SCHW\,531/15 and 314838170, GRK 2297 MathCoRe of the Deutsche Forschungsgemeinschaft DFG.

\bibliographystyle{plain}
\bibliography{bibliography}

\end{document}